\theoremstyle{plain}
\newtheorem{theorem}{Theorem}
\newtheorem{defin}{Definition}
\newtheorem{prop}{Proposition}
\newtheorem{lem}{Lemma}
\theoremstyle{remark}
\newtheorem{remark}{Remark}
\newcommand{\beq}{\begin{equation}}
\newcommand{\eeq}{\end{equation}}
\newcommand{\beql}[1]{\begin{equation}\label{#1}}
\newcommand{\beal}[1]{\begin{equation}\label{#1}\begin{aligned}}
\newcommand{\eeal}{\end{aligned}\end{equation}}
\newcommand{\ben}{\begin{eqnarray}}
\newcommand{\een}{\end{eqnarray}}
\DeclareMathOperator{\sgn}{sgn}
\DeclareMathOperator{\dive}{div}
\DeclareMathOperator{\vol}{vol}
\renewcommand{\b}[1]{\ensuremath{\mathbf{#1}}} 
\def\PicPath{}
\begin{document}

\title[Towards monitoring microscopic parameters for electropermeabilization]{Towards monitoring critical  microscopic parameters for electropermeabilization}

\author{H. Ammari}
\address{Department of Mathematics, 
ETH Z\"urich, 
R\"amistrasse 101, CH-8092 Z\"urich, Switzerland}
\email{habib.ammari@math.ethz.ch}
\thanks{This work was supported  by the ERC Advanced Grant Project MULTIMOD--267184.}

\author{T. Widlak}
\address{Department of Mathematics and Applications,
Ecole Normale Sup\'erieure, 45, rue d'Ulm, 75230 Paris Cedex 05, France}
\email{thomas.widlak@ens.fr}

\author{W. Zhang}
\address{Department of Mathematics and Applications,
Ecole Normale Sup\'erieure, 45, rue d'Ulm, 75230 Paris Cedex 05, France}
\email{wenlong.zhang@ens.fr}

%
%

\begin{abstract}
Electropermeabilization is a clinical technique in cancer treatment to locally
stimulate the cell metabolism. It is based on electrical fields that change the
properties of the cell membrane. With that, cancer treatment can reach the cell more
easily. Electropermeabilization occurs only with accurate dosage of the electrical field. For
applications, a monitoring for the amount of electropermeabilization is needed.
It is a first step to image the macroscopic electrical field during the process. Nevertheless, this is not complete, because electropermeabilization depends on critical individual properties of the cells
such as their curvature. From the macroscopic field, one cannot directly infer that  microscopic state.
In this article, we study effective parameters in a homogenization model as the next
step to monitor the microscopic properties in clinical practice. We start from a physiological
cell model for electropermeabilization and analyze its well-posedness. For a
dynamical homogenization scheme, we prove convergence and then analyze the effective
parameters, which can be found by macroscopic imaging methods. We demonstrate
numerically the sensitivity of these effective parameters to critical microscopic parameters governing electropermeabilization. This opens the door to solve the inverse problem of rreconstructing these parameters. \end{abstract}

\maketitle

\bigskip

\noindent {\footnotesize Mathematics Subject Classification
(MSC2000): 35B30, 35R30.}

\noindent {\footnotesize Keywords: electropermeabilization, cell membrane, homogenization, sensitivity of effective parameters.}

\section{Introduction}
The technique of electropermeabilization (formerly referred to as electroporation) is employed to make the
chemotherapeutical treatment of cancer more efficient and avoid side-effects.
Instead of spreading out drugs over the whole body, electropermeabilization makes it possible
to focus drug application on special areas. The mechanism of electropermeabilization relies
on careful exposition of biological tissue to electrical fields: this changes the
membrane properties of the cells such that treatment can enter more easily just
at precisely defined areas of the tissue \cite{Ivo10,MikSnoZupKosCer10}.

The local change in microscopic tissue properties, which electropermeabilization effects, occurs only with field strengths above a certain threshold. On
the other hand, too strong fields result in cell death. One therefore thinks of
electropermeabilization occurring within a certain threshold of intensity of the local
electric field \cite{DerMik14}.

For treatment planning in electropermeabilization, one is interested in the percentage of
electroporated cells over the whole tissue to form decisions in the short term how
to gear treatment \cite{KraMarBajCemSer15,DerMik14,MikSnoZupKosCer10}.

One would like supervise the electropermeabilization using measurements of the electric field
distribution with image modalities like in \cite{KraMarBajCemSer15}. In that work, measurements of magnetic resonance electrical impedance tomography  \cite{mreit} have been employed to find the electrical field distribution. A threshold is then applied to find the electroporated cells.

Yet this approach is only the first step in a larger program:
\begin{itemize}
\item the electrical field distribution reconstructed by an imaging modality is a
macroscopic quantity;
\item the thresholding hypothesis is a simplification and should be refined
\cite{DerMik14};
\item the minimum transmembrane voltage governing electropermeabilization is determined by
specific cell characteristics like the curvature of the cell membrane 
\cite{TowKotPucFirMoz08}.
\end{itemize}
One solution to find about microscopic parameters from measurements is to take general models and do a specific parameter fitting with preselected cells like in
\cite{DerMik14}. In clinical practice, though, a preselected cell population may be
unavailable for the analysis.

In this paper, we tackle the next step in electropermeabilization  monitoring and investigate the question to determine microscopic parameters from macroscopic measurements.  The modelling used stems from general physiological tissue models for cells,
asymptotically simplified by Neu and Krassowska~\cite{NeuKra99}. Whereas the
mathematical well-posedness of the model of that model is not available in the
literature, there exists an investigation of well-posedness for a similar model in \cite{KavLegPoiWey14}. In this paper, demonstrate the local
well-posedness of the asymptotic cell model of \cite{NeuKra99}, as well as the absence of a blow up. A  variant of the model is shown to be globally well-posed.

In order to describe the relation between macroscopic and microscopic quantities, we apply the homogenization scheme in \cite{AmaAndBisGia04} to the cell model of Neu and Krassowska \cite{NeuKra99}. This not only describes isotropic effective parameters such as classical theory \cite{ PavSliMik02}, but includes also anisotropy. We provide a
convergence analysis for the homogenized solution.

Then we study numerically the sensitivity of the effective parameters to:
\begin{itemize}
\item the conductivities of the extra- and intracellular media;
\item the shape of the cell membrane;
\item the volume fraction of the cells; 
\item the lattice structure of the cells.
\end{itemize}
We refer to research in \cite{
TowKotPucFirMoz08,IvoVilMir10,MikBerSemCemDem98, PavPavMik02,PucKotValMik06}, where these critical parameters for electropermeabilization have been investigated, partly from an empirical or computer simulation point of view.

The structure of the paper is as follows. In Section \ref{sec:Model}, we introduce
the model of \cite{NeuKra99} on the cellular scale. In
Section \ref{sec:Wellposed}, we investigate its well-posedness properties. In Section \ref{sec:Homogenization}, we perform the homogenization
and show the convergence of the homogenized solution. In  
Section \ref{sec:Illustration}, we provide a sensitivity analysis of the effective
parameters, showing dependence on microscopic properties, summarized in Table~\ref{tab:Sensitivity}. A discussion and final remarks in Section~
\ref{sec:Discussion} conclude the article.

\section{Modelling electropermeabilization on the cellular scale}
\label{sec:Model}
\subsection{Membrane model}
Let $Y\Subset\mathbb{R}^d$ be a bounded domain representing the cell, and let $\Gamma \subset Y$ be the membrane of the cell. Let 
	\[
Y \setminus \Gamma =Y_i\cup Y_e,
\]
where $Y_i$ (resp. $Y_e$) is be the inner (resp. the outer) domain. Let $\sigma_i(x)$ be the conductivity of the cell domain $Y_i$, and $\sigma_e(x)$ be the conductivity outside the cells on $Y_e$.

Let $u_0$ be an imposed voltage on the boundary of $Y$. An electrostatic model for the electrical field $u(x,t)$ on $Y$ in the inner and outer domain is
\begin{align}
\label{eq:Divergence}
	\nabla \cdot (\sigma(x)\;\nabla u(x,t)) &= 0  &&\text{on } Y\setminus\Gamma=Y_i\cup Y_e,\\
\label{eq:BoundaryFar}
u &= u_0 &&\text{on } \partial Y   \text{, with }\Delta u_0=0 \text{ in } Y,\\
\label{eq:BoundaryEq}
	\sigma_e\b n\cdot\nabla u^+  = \sigma_i\b n\cdot\nabla u^-=\sigma  \b n\cdot\nabla u &= \sigma \partial_n u&&\text{on }\Gamma.
\end{align}
Here and throughout this paper, $\partial_n$ denotes the normal derivative. 
 
\subsection{Electropermeabilization models}

In addition to the membrane model, a time-varying conductivity $ \sigma_m(x,t)$ for $x\in\Gamma$ is taken account of.  The general effect of electropermeabilization  is described by relating $\sigma_m$ and the membrane thickness $\delta$ to the transmembrane potential (TMP) jump $u^+(x,t)-u^-(x,t):=[u](x,t)$ in an ordinary differential equation (ODE) on $\Gamma$:
\begin{align}
\label{eq:BoundaryVal}
	\sigma(x)\b n\cdot\nabla u(x,t) &= \frac{c_m}{\delta}\partial_t [u](x,t) + \frac{\sigma_m\left( [u](x,t),t\right) }{\delta}[u](x,t) &\text{ on }\Gamma.
\end{align}
	Here, the vector $\b n$ is the outward normal to $\Gamma$, $\partial_n$ is the normal derivative, the superscripts $\pm$ denote the limits for outside and inside $Y_i$, and $c_m$ is a positive constant.

The membrane conductivity $\sigma_m$ in \eqref{eq:BoundaryVal} is described by different models. In \cite{IvoVilMir10}, Mir et al. propose a static model based on
\begin{equation}
\label{eq:IvorraMir}
\sigma_m([u]) = \sigma_{m0} + K\;(e^{\beta [u]}-1),
\end{equation}
for some constants $\sigma_{m0}, K,$ and $\beta$, and used the model \eqref{eq:Divergence}-\eqref{eq:BoundaryVal} and \eqref{eq:IvorraMir} as a boundary-value problem for an elliptic equation with nonlinear transmission conditions at the membrane.

	\bigskip
The classical and more involved model for $\sigma_m$ due to Neu and Krassowska \cite{NeuKra99} is explained in the following. It assumes that $\sigma_m$ is the sum of $\sigma_{m0}$ and an electropermeabilization current. The latter is proportional to the
 pore density $N$, which in turn is governed by an ordinary differential equation:
\begin{align}
	\label{eq:electropermeabilization}
	\sigma_m([u],t) &= \sigma_{m0} + \beta N([u],t)  &\text{on }\Gamma \times ]0,T[,\\
	\label{eq:InitialN}
	N([u],0) &= N_0(x) &\text{on }\Gamma, \\
\label{eq:PoreDensity}
	\partial_t N([u],t) &=  \alpha\; e^{\left(\frac{[u](x,t)}{V_{ep}}\right)^2}\left(1-\frac{N([u],t)}{N_0}e^{-q\left(\frac{[u](x,t)}{V_{\mathrm{ep}}}\right)^2}\right) &\text{on }\Gamma  \times ]0,T[,
\end{align}
where $ \alpha,\beta, q,$ and $N_0$ are constants, $V_{\mathrm{ep}}$ is the minimum transmembrane voltage for electropermeabilization, and $T$ is the final time. 

Given the condition
\begin{align}
\label{eq:Initial}
u(x,t)&= u_{\mathrm{ref}}&\text{on } Y \text{ for } t<0,
\end{align} the initial value problem \eqref{eq:Divergence}-\eqref{eq:BoundaryVal} and \eqref{eq:electropermeabilization}-\eqref{eq:Initial} is then solved on $Y\times ]0,T[$.

\medskip

Another model for $\sigma_m$ has been developed in \cite{KavLegPoiWey14}. Together with \eqref{eq:electropermeabilization}, \eqref{eq:InitialN}, one uses the dynamics
\ben
\partial_t N([u],t) &= \max \left( \frac{\beta([u])-N([u],t)}{\tau_{\mathrm{ep}}} , \frac{\beta([u])-N([u],t)}{\tau_{\mathrm{res}}} \right)
\een
with  
\[
	\beta(\lambda)=(1+\tanh(k_{\mathrm{ep}}(|\lambda|-V_{\mathrm{ep}})))/2,
\]
and given constants $\tau_{\mathrm{ep}}$, $\tau_{\mathrm{res}}$, and $k_{\mathrm{ep}}$.

\section{Wellposedness of the electropermeabilization model}
\label{sec:Wellposed}
In this section, we treat the classical electropermeabilization model model \eqref{eq:Divergence}-\eqref{eq:BoundaryVal} and \eqref{eq:electropermeabilization}-\eqref{eq:Initial} and study it in the form of an ODE on the membrane $\Gamma$.

As a preliminary step, let us prove the following representation of the pore density $N$.

\begin{lem}
\label{prop:PoreDensity}
\begin{enumerate}
\item[(i)] For $[u]=v$, the solution of the initial value problem in \eqref{eq:InitialN}, \eqref{eq:PoreDensity} is
\begin{equation}
\label{eq:PoreDensityRep}
N(x,t) =e^{-\int_0^t \frac{\alpha}{N_0}\; e^{(1-q)\left(\frac{v(x,\tau)}{V_{\mathrm{ep}}}\right)^2 } d\tau}\; N_0 + \left( \int_0^t \alpha\; e^{\left(\frac{v(x,s)}{u_{0}}\right)^2} \, e^{-\int_s^t \frac{\alpha}{N_0}\; e^{(1-q)\left(\frac{v(x,\tau)}{V_{\mathrm{ep}}}\right)^2 } d\tau}\,ds  \right).
\end{equation}

\item[(ii)] The pore density $N$, considered as a mapping $v(x,t)\mapsto N(v(x,t),t)$
\beql{eq:Leguebe}
	C([0,T],C(\Gamma))\times[0,T]\to C([0,T],C(\Gamma)),
\eeq
maps bounded sets to bounded sets.
\end{enumerate}
\end{lem}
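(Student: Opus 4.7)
For part (i), the plan is to view the ODE \eqref{eq:PoreDensity} as a linear first-order ODE in $t$ at each fixed $x\in\Gamma$, treating $v$ as a prescribed continuous function. Concretely, I would rewrite the equation in the form
\begin{equation*}
\partial_t N(x,t) + a(x,t)\,N(x,t) = b(x,t),
\end{equation*}
with $a(x,t) := \tfrac{\alpha}{N_0}\, e^{(1-q)(v(x,t)/V_{\mathrm{ep}})^2}$ and $b(x,t) := \alpha\, e^{(v(x,t)/V_{\mathrm{ep}})^2}$. Applying the standard integrating factor $\exp\bigl(\int_0^t a(x,\tau)\,d\tau\bigr)$ and using the initial condition \eqref{eq:InitialN}, the variation-of-constants formula immediately yields the representation \eqref{eq:PoreDensityRep} (with the understanding that the symbol $u_0$ appearing in the denominator of one exponent in the stated formula is $V_{\mathrm{ep}}$).

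For part (ii), suppose $\|v\|_{C([0,T],C(\Gamma))}\le M$. Then on $\Gamma\times[0,T]$ the functions $a$ and $b$ are continuous and satisfy the uniform bounds
\begin{equation*}
0 \;\le\; a(x,t) \;\le\; \tfrac{\alpha}{N_0}\,e^{|1-q|(M/V_{\mathrm{ep}})^2}, \qquad 0 \;\le\; b(x,t) \;\le\; \alpha\, e^{(M/V_{\mathrm{ep}})^2}.
\end{equation*}
Since $a\ge 0$, the integrating factor $e^{-\int_s^t a(x,\tau)\,d\tau}$ is bounded by $1$ for $0\le s\le t\le T$. Substituting into \eqref{eq:PoreDensityRep} and taking absolute values gives
\begin{equation*}
\|N\|_{C([0,T],C(\Gamma))} \;\le\; N_0 \;+\; \alpha T\, e^{(M/V_{\mathrm{ep}})^2},
\end{equation*}
which is the desired bounded-set-to-bounded-set property. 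Joint continuity of the right-hand side of \eqref{eq:PoreDensityRep} in $(x,t)$ (needed to conclude $N\in C([0,T],C(\Gamma))$) follows from the continuity of $v$ together with dominated convergence applied to the $s$- and $\tau$-integrals, using the uniform bounds above as dominants.

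The argument is essentially elementary: the only tricky bookkeeping is the sign of $1-q$ when bounding the exponential in $a$, which is handled by the absolute value in the exponent above, and matching the stated representation to the integrating-factor output. I do not anticipate any genuine obstacle, since the ODE is linear once $v$ is given and the nonlinear dependence of $N$ on $v$ enters only through the coefficients $a,b$, which depend continuously and boundedly on $v$.
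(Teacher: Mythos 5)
Your proof is correct and follows essentially the same route as the paper: both treat \eqref{eq:PoreDensity} as a linear inhomogeneous ODE in $t$ at fixed $x$ with $v$ prescribed, obtain \eqref{eq:PoreDensityRep} by variation of constants (the paper quotes the general solution formula from Amann's book, you derive it directly with the integrating factor), and deduce (ii) from the explicit representation. Your write-up is in fact a bit more careful than the paper's, since you note that the $u_0$ in one exponent of \eqref{eq:PoreDensityRep} should read $V_{\mathrm{ep}}$ and you spell out the uniform bound $N_0+\alpha T\,e^{(M/V_{\mathrm{ep}})^2}$ (using $a\ge 0$ so the integrating factor is at most $1$) where the paper simply calls the boundedness ``immediate''.
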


\begin{proof}
Note that the solution to a linear inhomogeneous ordinary differential equation
\begin{equation}
\label{eq:AllgForm}
	\frac{\partial}{\partial t}N(t) = A(t)\, N(t) + b(t)
\end{equation}
is given by \cite[Thm. 5.14]{Ama90}
\begin{equation}
\label{eq:AllLoes}
	N(t) = U(t,0)\; N_0 + \int_s^t U(t,s) \;b(s) \; ds,
\end{equation}
where 
\[
	U(t,s) = \int_s^t A(\tau)\,d\tau.	
\]
Equation \eqref{eq:PoreDensity} is a special form of \eqref{eq:AllgForm}, and the coefficients $A$ and $b$ are
\[
	A(t) = -\frac{\alpha}{N_0}\; e^{(1-q)\left(\frac{[u](t)}{V_{\mathrm{ep}}}\right)^2 },
\]
and
\[
	b(t)= \alpha\; e^{\left(\frac{[u](t)}{V_{\mathrm{ep}}}\right)^2}.
\]
Inserting $A$ and $b$ into the general solution \eqref{eq:AllLoes}, we directly obtain the representation \eqref{eq:PoreDensityRep} 
in (i).

Using the norm $\| v\|_{C(\Gamma)}=\sup_{x\in \Gamma}|v(x)|$, the boundedness property in (ii) is then immediate.
\end{proof}

\begin{remark}
\label{rem:N}
In practice, it is clear that the potential $v$ stays finite. One may therefore choose a  real number $M>0$ and work instead of $N(v,t)$ with the function
\beql{eq:Nmod}
N_M(v,t) := N(v_M,t)\qquad\text{with } v_M:=\left\{\begin{array}{ll}
|v|&|v|\leq M\\
M&|v|>M\\
-M&|v|<M
\end{array}\right..
\eeq
For $\|v\|_{L^\infty(\Gamma)}<M$, this cutoff preserves the pore density: $N_M(v,t)=N(v,t)$. In Lemma \ref{lem:Lipschitz}, it is shown that the function $v\mapsto N_M(v,t)v_M$, considered in $C((0,T);L^2(\Gamma))$, has a global Lipschitz property.
\end{remark}

\subsection{Reduction to an ordinary differential equation}
\begin{defin}[Stekhlov-Poincar\'e operators]
\label{def:StekhlovP}
Let $H^{s}(\Gamma)$ be the standard Sobolev space on $\Gamma$ of order $s$. 
Let $f\in H^{\frac{1}{2}}(\Gamma)$ be given. Define solutions of Dirichlet boundary value problems and assign the Neumann data via the \emph{Stekhlov-Poincar\'e operators} $\Lambda_c$, $\Lambda_e$:
$H^{1/2}(\Gamma) \rightarrow H^{-1/2}(\Gamma)$  and $ \Lambda_0$: $H^{1/2}(\partial \Omega) \rightarrow H^{-1/2}(\Gamma)$, 

$$
\Lambda_c f := \partial_n P_1, \quad \Lambda_e f := \partial_n P_2, \quad \Lambda_0 f := \partial_n P_3,$$
where $P_i, i=1,2,3$ are solutions to
\[
\left\{
\begin{array}{lcll}
\Delta P_1&=&0 \quad&\mbox{in } Y_i, \\
 P_1  &= &f \quad &\mbox{on } \Gamma, \\
\end{array} \right. 
\;\; 
\]
and 
\[
\left\{
\begin{array}{lcll}
\Delta P_2 &=&0 \quad&\mbox{in } Y_e, \\
P_2 &=&0 \quad &\mbox{on } \partial Y, \\
P_2 &= &f  \quad &\mbox{on } \Gamma,
\end{array} \right.
\qquad \qquad
\left\{
\begin{array}{lcll}
	\Delta P_3 &=&0 \quad&\mbox{in } Y_e, \\
	P_3 &=&f  \quad &\mbox{on } \partial Y, \\
	P_3 &=& 0 \quad &\mbox{on } \Gamma.\\
\end{array} \right. 
\]
\end{defin}
The following results hold.
\begin{lem}
\label{prop:reduction}
\begin{enumerate}
\item[(i)]
Solving the problem
\eqref{eq:Divergence}-\eqref{eq:BoundaryVal} and \eqref{eq:electropermeabilization}-\eqref{eq:Initial} 
for $u=(u_i,v,u_e)$ on $Y_i\cup\Gamma\cup Y_e$ is equivalent to solving
the initial value problem
\begin{equation}
\label{eq:redODE}
\begin{aligned}
	\frac{c_m}{\delta}\partial_t v + \frac{\sigma_m}{\delta}(v,t)v&+\Lambda_c B^{-1}v = G,\\
	v(0)&= \varphi,
\end{aligned}
\end{equation}
for $v$ on $\Gamma$, with the correspondence
\[
\begin{aligned}
	u_i&= - B^{-1}(v + \Lambda_e^{-1}\Lambda_0 g),\\
	u_e &= u_i + v.
\end{aligned}
\]
Here, $B=Id + \Lambda_e^{-1}\Lambda_0 , G= - \Lambda_c B^{-1} \Lambda_e^{-1} \Lambda_0 g$, and
\begin{equation}
\label{eq:sigExpl}
\sigma_m(v,t) = \sigma_{m0}(x) + \beta N(v,t).
\end{equation}

\item[(ii)] The linear operator $\Lambda_c B^{-1}: H^1(\Gamma)\to L^2(\Gamma)
$ is m-accretive. In particular, one has
\beql{eq:accret}
	\forall v:\qquad\langle \Lambda_c B^{-1}v,v\rangle_{L^2}\geq 0,
\eeq
where $\langle \; , \; \rangle_{L^2}$ is the  scalar product on $L^2(\Gamma)$.
\end{enumerate}
\end{lem}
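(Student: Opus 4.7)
The plan is to derive (i) by inverting the Stekhlov--Poincar\'e relations on $\Gamma$, and to obtain (ii) from a Green identity combined with the solvability of an associated elliptic transmission problem.

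For (i), I fix $t$ and note that \eqref{eq:Divergence} with piecewise constant $\sigma_i,\sigma_e$ reduces to $\Delta u=0$ separately in $Y_i$ and $Y_e$. Writing $g:=u_0|_{\partial Y}$, $f_i:=u^-|_\Gamma$ and $f_e:=u^+|_\Gamma$, I decompose $u|_{Y_e}=P_2[f_e]+P_3[g]$ and $u|_{Y_i}=P_1[f_i]$, so that $\partial_n u^+=\Lambda_e f_e+\Lambda_0 g$ and $\partial_n u^-=\Lambda_c f_i$. The jump identity $f_e-f_i=v$ together with the flux continuity \eqref{eq:BoundaryEq} (with the conductivities absorbed into the normal derivatives) yields a linear system for $f_i$ whose inversion produces precisely the formulas $u_i=-B^{-1}(v+\Lambda_e^{-1}\Lambda_0 g)$ and $u_e=u_i+v$ stated in the lemma. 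Substituting the corresponding expression for $\sigma\partial_n u$ into the membrane dynamics \eqref{eq:BoundaryVal} gives exactly the reduced equation \eqref{eq:redODE} with source $G=-\Lambda_c B^{-1}\Lambda_e^{-1}\Lambda_0 g$, and the converse direction is a routine verification.

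For (ii), the dissipativity \eqref{eq:accret} follows from a Green identity. Given $v\in H^1(\Gamma)$, setting $g=0$ in the correspondence of (i) produces harmonic functions $u_i$ on $Y_i$ and $u_e$ on $Y_e$ such that $u_e|_{\partial Y}=0$, $[u]=v$ and $\partial_n u_i=\partial_n u_e$ on $\Gamma$, and $\Lambda_c B^{-1}v=-\partial_n u_i$. Integration by parts in both subdomains, the boundary contribution on $\partial Y$ dropping out because $u_e$ vanishes there, gives
\begin{equation*}
\langle \Lambda_c B^{-1}v,\,v\rangle_{L^2(\Gamma)}=-\langle \partial_n u_i,\,u_e-u_i\rangle_\Gamma=\int_{Y_i}|\nabla u_i|^2\,dx+\int_{Y_e}|\nabla u_e|^2\,dx\;\geq\;0.
\end{equation*}
To upgrade dissipativity to m-accretivity one needs the range condition $R(\lambda I+\Lambda_c B^{-1})=L^2(\Gamma)$ for some $\lambda>0$. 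I would recast the equation $(\lambda I+\Lambda_c B^{-1})v=h$ as the elliptic transmission problem of finding $(u_i,u_e)$ harmonic in $Y_i,Y_e$ with $u_e|_{\partial Y}=0$, $\partial_n u_i=\partial_n u_e$ on $\Gamma$, and $-\partial_n u_i+\lambda(u_e-u_i)=h$ on $\Gamma$, and solve it by Lax--Milgram applied to
\begin{equation*}
a\bigl((u_i,u_e),(w_i,w_e)\bigr)=\int_{Y_i}\nabla u_i\cdot\nabla w_i\,dx+\int_{Y_e}\nabla u_e\cdot\nabla w_e\,dx+\lambda\int_\Gamma(u_e-u_i)(w_e-w_i)\,ds
\end{equation*}
on the closed subspace of $H^1(Y_i)\times H^1(Y_e)$ whose second component vanishes on $\partial Y$; Poincar\'e's inequality on $Y_e$ combined with the jump penalty supplies coercivity, and elliptic regularity on the smooth surface $\Gamma$ lifts the trace $v=u_e-u_i$ into the strong domain $H^1(\Gamma)$.

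The most delicate point will be the operator-theoretic bookkeeping in (i), namely checking the mapping properties of $\Lambda_e^{-1}\Lambda_0$ and the invertibility of $B$ between the appropriate Sobolev scales on $\Gamma$, together with the verification in (ii) that the variational solution produced by Lax--Milgram yields a trace $v$ that genuinely lies in $H^1(\Gamma)$, so that \eqref{eq:accret} is interpreted as an honest $L^2$ inner product rather than a mere duality pairing.
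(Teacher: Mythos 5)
Your argument is correct in outline, but be aware that the paper itself does not prove this lemma at all: its ``proof'' consists of citing Lemmas~8 and~9 of Kavian, Legu\`ebe, Poignard and Weynans \cite{KavLegPoiWey14}, and what you have written is essentially a self-contained reconstruction of that reference's argument (Steklov--Poincar\'e elimination of $u_i,u_e$ at frozen time for (i); Green's identity for the nonnegativity \eqref{eq:accret}; Lax--Milgram for the transmission problem realizing the range condition). Within that route three points deserve care. First, the elimination does not literally produce the operator $B=Id+\Lambda_e^{-1}\Lambda_0$ as printed: writing $u^+|_\Gamma=u^-|_\Gamma+v$ in the flux condition \eqref{eq:BoundaryEq} and inverting gives $B=Id+\Lambda_e^{-1}\Lambda_c$ (with the conductivity-weighted maps and $\Lambda_e$ computed with the normal exterior to $Y_e$); the occurrence of $\Lambda_0$ inside $B$ in the statement is in fact inconsistent, since $\Lambda_0$ acts on data on $\partial Y$ while $Id$ acts on functions on $\Gamma$, so your claim that the inversion yields ``precisely'' the stated formulas should be replaced by the corrected operator --- this is an issue with the statement, not with your method. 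Second, since the paper's $P_1,P_2,P_3$ are defined through $\Delta$ while \eqref{eq:Divergence}--\eqref{eq:BoundaryEq} carry $\sigma_i,\sigma_e$, the ``conductivities absorbed into the normal derivatives'' convention should be made explicit; with it, your Green identity gives $\langle\Lambda_c B^{-1}v,v\rangle_{L^2(\Gamma)}=\int_{Y_i}\sigma_i|\nabla u_i|^2\,dx+\int_{Y_e}\sigma_e|\nabla u_e|^2\,dx\geq 0$, which is exactly the accretivity computation of the cited Lemma~8. Third, coercivity of your bilinear form controls $u_e$ by Poincar\'e on $Y_e$, but for $u_i$ you additionally need the Friedrichs-type inequality $\|u_i\|_{H^1(Y_i)}\leq C\bigl(\|\nabla u_i\|_{L^2(Y_i)}+\|u_i\|_{L^2(\Gamma)}\bigr)$ together with the jump penalty and the trace bound on $u_e$; this is standard but should be said, as should the elliptic-regularity step identifying the variational solution's trace with an element of the strong domain. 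None of these is a genuine gap; your proposal supplies the content the paper outsources to \cite{KavLegPoiWey14}.
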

\begin{proof}
The reduction of the time-dependent model on $\Omega$ to the initial value problem on $\Gamma$ in \eqref{eq:redODE}, using the Steklov-Poincar\'e operators, is the same as in \cite[Lemma~9]{KavLegPoiWey14}. The property in (ii) is shown in \cite[Lemma~8]{KavLegPoiWey14}.
\end{proof}

For establishing existence and uniqueness results (in Theorem~\ref{thm:WellPosed}), we use the following lemma on the Lipschitz property of the function $N_M$ introduced in Remark~\ref{rem:N}.

\begin{lem}
\label{lem:Lipschitz}
Let $M>0$, and let $N_M(v,t) = N(v_M,t)$ with $v_M=\sgn(v)\min(|v|,M)$ be the modified pore density defined by \eqref{eq:Nmod}. Then 
\[
	v\longmapsto N_M(v,t)v_M
\]
is  global Lipschitz in $C((0,T);L^2(\Gamma))$.
\end{lem}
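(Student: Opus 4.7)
The approach is to use the explicit representation \eqref{eq:PoreDensityRep} of $N$ together with the fact that on $[-M,M]$ the nonlinearities $f(s):=e^{(s/V_{\mathrm{ep}})^2}$ and $g(s):=e^{(1-q)(s/V_{\mathrm{ep}})^2}$ are bounded and $C^1$, hence $L_M$-Lipschitz with a constant $L_M$ depending only on $M$, $V_{\mathrm{ep}}$ and $q$. This is precisely what the cutoff $v\mapsto v_M$ is for: without it, $f$ and $g$ are not globally Lipschitz on $\mathbb{R}$ and any Lipschitz constant would depend on $\|v\|_\infty$.

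First I would write the standard product decomposition
\[
N_M(v_1,t)\,v_{1,M}-N_M(v_2,t)\,v_{2,M}
=N_M(v_1,t)(v_{1,M}-v_{2,M})+\bigl(N_M(v_1,t)-N_M(v_2,t)\bigr)v_{2,M}.
\]
The truncation $v\mapsto v_M$ is $1$-Lipschitz pointwise, hence also $1$-Lipschitz from $L^2(\Gamma)$ to $L^2(\Gamma)$, and $|v_{2,M}|\le M$. Inspecting \eqref{eq:PoreDensityRep} with $v$ replaced by $v_M$ gives the uniform bound $\|N_M(v,t)\|_{L^\infty(\Gamma)}\le C(M,T)$ for every admissible $v$ and every $t\in[0,T]$. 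With these bounds in hand, the whole task reduces to proving that $v\mapsto N_M(v,t)$ is Lipschitz from $L^2(\Gamma)$ to $L^2(\Gamma)$, uniformly in $t\in[0,T]$.

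For this step, fix $x\in\Gamma$ and $t\in[0,T]$, and subtract the representations \eqref{eq:PoreDensityRep} written for $v_{1,M}$ and $v_{2,M}$. Using the $L_M$-Lipschitz property of $f$ and $g$, the elementary bound $|e^{-a}-e^{-b}|\le|a-b|$ for $a,b\ge 0$ applied to the (nonnegative, bounded) nested exponents $I_k(s,t)=\int_s^t\tfrac{\alpha}{N_0}\,g(v_{k,M}(\tau))\,d\tau$, and the splitting $|f(v_{1,M})e^{-I_1}-f(v_{2,M})e^{-I_2}|\le|f(v_{1,M})-f(v_{2,M})|+f(v_{2,M})|e^{-I_1}-e^{-I_2}|$, a direct term-by-term estimate yields the pointwise bound
\[
\bigl|N_M(v_1,t)(x)-N_M(v_2,t)(x)\bigr|\le C(M,T)\int_0^t|v_1(x,\tau)-v_2(x,\tau)|\,d\tau.
\]
Minkowski's integral inequality then promotes this to
\[
\|N_M(v_1,t)-N_M(v_2,t)\|_{L^2(\Gamma)}\le C(M,T)\int_0^t\|v_1(\cdot,\tau)-v_2(\cdot,\tau)\|_{L^2(\Gamma)}\,d\tau\le C(M,T)\,T\,\|v_1-v_2\|_{C([0,T];L^2(\Gamma))},
\]
and reinserting this estimate into the product decomposition closes the argument with a constant depending only on $M,T,\alpha,\beta,q,N_0,V_{\mathrm{ep}}$.

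The only genuine obstacle is the bookkeeping in the differencing of the nested exponentials in \eqref{eq:PoreDensityRep}: one must check that every constant that appears is controlled purely by $M,T,\alpha,\beta,q,N_0,V_{\mathrm{ep}}$, independently of $v_1$ and $v_2$. The truncation $v\mapsto v_M$ is exactly what secures this, and it is the mechanism that upgrades the merely locally Lipschitz map $v\mapsto N(v,t)\,v$ into the globally Lipschitz map $v\mapsto N_M(v,t)\,v_M$ of the statement.
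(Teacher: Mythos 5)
Your proposal is correct and follows essentially the same route as the paper: the same product decomposition, then boundedness of $v_M$ and of $N_M$, and a Lipschitz estimate for $v\mapsto N(v_M,t)$ extracted from the explicit representation \eqref{eq:PoreDensityRep}, lifted to $C((0,T);L^2(\Gamma))$. The only difference is that you spell out the differencing of the nested exponentials (via $|e^{-a}-e^{-b}|\le|a-b|$ and the $1$-Lipschitz truncation), which the paper compresses into the assertion that a constant $L(M)$ exists.
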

\begin{proof}
Let $v_1$, $v_2\in C((0,T);L^2(\Gamma))$. One has the algebraic identity
\begin{multline}
N(v_{1M},t)v_{1M}-N(v_{2M},t)v_{2M}= \\
\bigl(N(v_{1M},t)v_{1M}-N(v_{1M},t)v_{2M}\bigr) + \bigl(N(v_{1M},t)v_{2M}-N(v_{2M},t)v_{2M}\bigr). \label{thatident}
\end{multline}
Using the boundedness of $v_M$, (\ref{thatident}) shows that it suffices to prove that $N(v_M,t)$ is global Lipschitz in  $C((0,T);L^2(\Omega))$.

Consider the explicit form of $N(v,t)$ in \eqref{eq:PoreDensityRep}. As $\|v_M\|_{L^{\infty}}\leq M$, there exists a constant $L(M)$ such that
\beq
|N(v_{1M},t)-N(v_{2M},t)|^2\leq L(M) \int_0^t |v_1(x,s)-v_2(x,s)|^2ds.
\eeq
Therefore, we have
\beq
\|N(v_{1M},t)v_{1M}-N(v_{2M},t)v_{2M}\|_{C((0,T);L^2(\Gamma))}\leq C(M) \|v_{1}-v_{2}\|_{C((0,T);L^2(\Gamma))},
\eeq
and the global Lipschitz property of $N_M$ in $C((0,T);L^2(\Omega))$ holds.
\end{proof}

Using Lemma \ref{lem:Lipschitz}, we now come to the well-posedness results. For this end, we introduce the following auxiliary problem. As a variant to \eqref{eq:BoundaryVal}, we consider
\begin{align}
\label{eq:BoundaryValNeu}
	\sigma(x)\b n\cdot\nabla u(x,t) &= \frac{c_m}{\delta}\partial_t [u](x,t) + \frac{\sigma_m\left( [u]_M(x,t),t\right) }{\delta}[u]_M(x,t) &\text{ on }\Gamma.
	\tag{4'}
\end{align}

Using the same procedure as in Lemma \ref{prop:reduction}, we find that the model \eqref{eq:Divergence}-\eqref{eq:BoundaryEq},\eqref{eq:BoundaryValNeu} and \eqref{eq:electropermeabilization}-\eqref{eq:Initial} 
is equivalent to solving
\begin{equation}
\label{boundedv}
\begin{aligned}
 \frac{c_m}{\delta}\partial_t \tilde v + \frac{\sigma_m(\tilde   v_M,t)}{\delta} \tilde v_M +\Lambda_c B^{-1}\tilde v = G , \\
\tilde v(0)=\varphi . 
\end{aligned}
\end{equation}

Let us now state the well-posedness properties of our initial value problems on $\Gamma$.
\begin{theorem}
\label{thm:WellPosed} Let $G \in C^1((0,T);H^1(\Gamma))$ and $\varphi\in H^2(\Gamma)$. 
\begin{itemize}
\item[(i)]
The initial value problem in \eqref{boundedv} has a unique global solution $\tilde v\in C([0,T];H^2(\Gamma))$.
\item[(ii)] For the initial value  problem \eqref{eq:redODE}, there is a $t_0>0$ such that there exists a solution $v\in C([0,t_0[;H^2(\Gamma))$.
\item[(iii)] The solution in (ii) is unique on $C([0,t_1],H^2(\Gamma))$ for any  closed interval $[0,t_1]\subset[0,t_0[$.
\end{itemize}
\end{theorem}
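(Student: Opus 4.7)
The plan is to treat \eqref{boundedv} as a semilinear evolution equation on $L^2(\Gamma)$, pass to the untruncated problem \eqref{eq:redODE} by exploiting continuity in $L^\infty$, and then read off uniqueness from a comparison of the two. To make this precise, rewrite \eqref{boundedv} as
\begin{equation*}
\partial_t \tilde v + A\tilde v = F(\tilde v,t), \qquad \tilde v(0)=\varphi,
\end{equation*}
where $A := (\delta/c_m)\Lambda_c B^{-1}$ and $F(v,t) := -(1/c_m)\sigma_m(v_M,t)v_M + (\delta/c_m)G(t)$. By Lemma~\ref{prop:reduction}(ii), $A$ is m-accretive on $L^2(\Gamma)$ and therefore $-A$ generates a strongly continuous contraction semigroup. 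The nonlinearity $F(\cdot,t)$ is globally Lipschitz in $C((0,T);L^2(\Gamma))$: expanding $\sigma_m$ via \eqref{eq:sigExpl}, the part $\sigma_{m0} v_M$ is trivially Lipschitz since $v\mapsto v_M$ is $1$-Lipschitz, while the Lipschitz property of the pore-density term $\beta N(v_M,t)v_M$ is exactly the content of Lemma~\ref{lem:Lipschitz}.

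With these ingredients, the classical semilinear theory for contraction semigroups with globally Lipschitz perturbation (variation-of-constants / Banach fixed point on $C([0,T];L^2(\Gamma))$) yields a unique global mild solution $\tilde v\in C([0,T];L^2(\Gamma))$ for part (i). To upgrade to $C([0,T];H^2(\Gamma))$, I would use the assumptions $\varphi\in H^2(\Gamma)$ and $G\in C^1((0,T);H^1(\Gamma))$ together with the smoothing/regularity properties of the Steklov--Poincar\'e operator $\Lambda_c B^{-1}$: since $\Lambda_c B^{-1}$ behaves as a first-order pseudodifferential operator, its natural domain lifts the scale by one, and the Lipschitz map $v\mapsto N_M(v,t)v_M$ preserves $H^1$ (resp.\ $H^2$) regularity thanks to the explicit representation \eqref{eq:PoreDensityRep} (the exponential factors depend smoothly on $v$, and truncation at $M$ keeps all derivatives bounded). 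The bootstrap is the technical heart of (i) and the main obstacle; once it is carried out, strong differentiability in time follows from the Hille--Yosida theory for strong solutions.

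For (ii), fix $M>\|\varphi\|_{L^\infty(\Gamma)}$, which is finite because $\varphi\in H^2(\Gamma)\hookrightarrow L^\infty(\Gamma)$ in dimensions of interest. Let $\tilde v$ be the global solution of \eqref{boundedv} provided by (i). Since $\tilde v\in C([0,T];H^2(\Gamma))\hookrightarrow C([0,T];L^\infty(\Gamma))$ and $\|\tilde v(0)\|_{L^\infty}<M$, there exists $t_0>0$ such that $\|\tilde v(t)\|_{L^\infty}<M$ for all $t\in[0,t_0[$. On this interval one has $\tilde v_M = \tilde v$, so the truncated equation \eqref{boundedv} reduces to the original \eqref{eq:redODE}; hence $v:=\tilde v|_{[0,t_0[}\in C([0,t_0[;H^2(\Gamma))$ is the desired local solution.

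For (iii), suppose $v_1,v_2\in C([0,t_1];H^2(\Gamma))$ both solve \eqref{eq:redODE} on a closed subinterval $[0,t_1]\subset[0,t_0[$. By compactness and the embedding $H^2(\Gamma)\hookrightarrow L^\infty(\Gamma)$, there exists $M'$ with $\|v_j(t)\|_{L^\infty}\le M'$ for all $t\in[0,t_1]$, $j=1,2$. Choosing any $M>M'$, we have $(v_j)_M = v_j$, so each $v_j$ also solves the truncated problem \eqref{boundedv} with this $M$. The global uniqueness in (i) then forces $v_1\equiv v_2$ on $[0,t_1]$.
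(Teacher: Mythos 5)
Your overall architecture coincides with the paper's: global well-posedness of the truncated problem \eqref{boundedv} via the global Lipschitz property of $v\mapsto N_M(v,t)v_M$ (Lemma \ref{lem:Lipschitz}) combined with a contraction/fixed-point argument (the paper delegates exactly this step to the fixed point theorem of \cite{KavLegPoiWey14}, which is the same semigroup-type argument you invoke, using the m-accretivity of $\Lambda_c B^{-1}$ from Lemma \ref{prop:reduction}); then local solvability of \eqref{eq:redODE} by showing $\|\tilde v(t)\|_{L^\infty}<M$ on a short interval so that the truncation is inactive; and uniqueness in (iii) by re-truncating two given solutions with an $M$ dominating their $H^2\hookrightarrow L^\infty$ bounds, which is precisely the paper's argument. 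In (ii) you use a soft continuity-in-time argument to produce some $t_0>0$, whereas the paper integrates the equation to obtain the explicit value $t_0=\frac{c_m}{\delta}\frac{M-\|\varphi\|_{L^\infty}}{C_M}$; both are acceptable since the statement only asks for the existence of some $t_0>0$.

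The one genuine deficiency is in (i): the upgrade from $C([0,T];L^2(\Gamma))$ to $C([0,T];H^2(\Gamma))$, which you yourself flag as ``the technical heart and the main obstacle'', is asserted rather than proved. Your sketch rests on two unverified claims: that the natural domain of $\Lambda_c B^{-1}$ lifts the Sobolev scale by one in the relevant sense, and that $v\mapsto N_M(v,t)v_M$ preserves $H^1$ and $H^2$ regularity; the latter is delicate because the cutoff $v\mapsto v_M$ is Lipschitz but not $C^1$, so composition with it does not automatically preserve $H^2$. The paper closes this step differently, by bootstrapping through the subdomains rather than on $\Gamma$ alone: from the equation one reads off $\partial_n u_i\in L^2(\Gamma)$, elliptic regularity then gives $\tilde u_i\in H^{3/2}(Y_i)$ and $\tilde u_e\in H^{3/2}(Y_e)$, hence the trace jump $\tilde v=\tilde u_e-\tilde u_i$ gains regularity on $\Gamma$ (landing in $C([0,T];H^1(\Gamma))$), and repeating the argument once more yields $\tilde v\in C([0,T];H^2(\Gamma))$, using the hypotheses $G\in C^1((0,T);H^1(\Gamma))$ and $\varphi\in H^2(\Gamma)$. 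To make your proof complete you would either have to carry out your pseudodifferential/regularity-preservation claims in detail or substitute this trace-and-elliptic-regularity bootstrap for them.
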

\begin{proof}
(i): Let $M>\|\varphi\|_{L^{\infty}}$ be a constant and consider  the initial value problem \eqref{boundedv}. Fix a number $T>0$.

Due to the global Lipschitz property of $N_Mv_M$ shown in Lemma \ref{lem:Lipschitz}, one can apply the fixed point argument in \cite[Thm.10]{KavLegPoiWey14}) to conclude that there exists a unique solution $\tilde v \in C([0,T];L^2(\Gamma))$ solving (\ref{boundedv}).

If one additionally assumes that $G \in C^1([0,T];H^1(\Gamma))$ and $\varphi\in H^2(\Gamma)$, then one can likewise conclude $\tilde v\in C^1([0,T];H^2(\Gamma))$.
Then we have that $\partial_n u_i  \in L^2(\Gamma)$. With such boundary regularity, we infer $\tilde u_i \in H^{3/2}(Y_i)$, similarly $\tilde u_e\in H^{3/2}(Y_e)$. Then $\tilde v=\tilde u_e-\tilde u_i \in C([0,T];H^1(\Gamma))$. 
Using this argument once again, we have that $\tilde v=\tilde u_e-\tilde u_i \in C([0,T];H^2(\Gamma))$.
\medskip

(ii): We will now show that the solution $\tilde v$ to \eqref{boundedv} found in point (i) solves locally the original problem \eqref{eq:redODE}. -- Using the Sobolev embedding theorem one has  that
\[
	\Lambda_c B^{-1}\tilde v \in C([0,T];H^1(\Gamma)) \hookrightarrow C([0,T];C(\Gamma)).
\]
Take a constant $C_M$ such that, for any $t\leq T$, one has
\[
	\left\| \frac{\sigma_m(\tilde v_M,t)}{\delta} \tilde v_M +\Lambda_c B^{-1}\tilde v + G\right\|_{C(\Gamma)} \leq C_M.
\]
Define $$t_0:=\frac{c_m}{\delta}\frac{M-\|\varphi\|_{L^{\infty}}}{C_M}.$$ Then, for $t\leq t_0$, one gets
\[
\begin{aligned}
	\|\tilde v(x,t)\|_{L^{\infty}(\Gamma)} & \leq \|\varphi\|_{L^{\infty}} + t C_M , \\
	&\leq M.
	\end{aligned}
\]
But for $\|\tilde v\|_\infty<M$, one has that $\tilde v_M=\tilde v$ and $N_M(\tilde v,t)=N(\tilde v,t)$. Therefore, the expressions in \eqref{eq:redODE} and \eqref{boundedv} are the same, which implies that, locally, $\tilde v$ solves as well the original initial value problem  \eqref{eq:redODE}.

\medskip

(iii): Take two solutions $v,w$ to \eqref{eq:redODE} in $C^1([0,t_1],H^2(\Gamma))$. Due to closedness of $[0,t_1]$ and continuity of the norm $\| .\|_{H^2}\to \mathbb{R}$, there exists a $M>0$ such that for every $t\in[0,t_1]$, one has
\[
	\|v(t)\|_{H^2}<M\qquad\text{and}\qquad \|w(t)\|_{H^2} < M.
\]
But then the cutoff with respect to $M$ does not change the functions: $v_M=v$ and $w_M=w$. Therefore, $v$ and $w$ also solve \eqref{boundedv}. But for that ODE, one has a global uniqueness property. Therefore $v=w$ on $[0,t_1]$.
\end{proof}

We now give a more detailed analysis of the terms in equation \eqref{eq:redODE} to show that a solution cannot blow up in finite time (see Theorem \ref{thm:noBlowup}).

Note that for $\sigma_m$ given by \eqref{eq:electropermeabilization}, there exists a $C\in \mathbb{R}$ such that one has for all $v$ that
\begin{equation} \label{lem:Coercivity}
	\langle\sigma_m(v,t)v,v\rangle_{L^2}\geq C \| v \|_{L^2}^2.
\end{equation}
This immediately follows from the expression of the membrane conductivity in \eqref{eq:electropermeabilization} and the fact that both the pore density $N$ as well as $N_M$ in \eqref{eq:Nmod} are positive.

\begin{theorem}
\label{thm:noBlowup}
For a function $v\in C^1([0,t_0[,L^2(\Gamma))$ which solves \eqref{eq:redODE}, it is impossible that
\[
	\| v(t_k) \|_{L^2(\Gamma)}\longrightarrow_{t\to b}\infty\qquad \text{for } b\in[0,t_0[.
\]
\end{theorem}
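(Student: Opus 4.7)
The plan is to derive an energy estimate for $\|v(t)\|_{L^2(\Gamma)}^2$ and then invoke Gronwall's inequality to rule out finite-time blow-up. Starting from the reduced ODE \eqref{eq:redODE}, I would take the $L^2(\Gamma)$ scalar product with $v$ itself to obtain
$$
\frac{c_m}{2\delta}\frac{d}{dt}\|v\|_{L^2}^2 + \frac{1}{\delta}\langle \sigma_m(v,t)\,v,\,v\rangle_{L^2} + \langle \Lambda_c B^{-1} v,\,v\rangle_{L^2} = \langle G,\,v\rangle_{L^2}.
$$
This reduces the task to controlling each of the three terms on the left and bounding the source on the right.

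Next I would dispose of the nonlocal operator term: by the m-accretivity stated in Lemma~\ref{prop:reduction}(ii), the Steklov-Poincar\'e contribution $\langle \Lambda_c B^{-1} v,v\rangle_{L^2}$ is nonnegative and may be discarded. For the membrane-conductivity term, the coercivity bound \eqref{lem:Coercivity} gives $\langle \sigma_m(v,t)\,v,\,v\rangle_{L^2} \geq C\|v\|_{L^2}^2$ for some $C\in\mathbb{R}$; this is exactly why the authors recorded that inequality just before the statement. Finally, Cauchy--Schwarz followed by Young's inequality yields $\langle G,v\rangle_{L^2} \leq \tfrac{1}{2}\|G\|_{L^2}^2 + \tfrac{1}{2}\|v\|_{L^2}^2$.

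Combining these estimates produces a differential inequality of the form
$$
\frac{d}{dt}\|v(t)\|_{L^2}^2 \;\leq\; A\,\|v(t)\|_{L^2}^2 + B(t),
$$
where $A\in\mathbb{R}$ absorbs the constants coming from $C/\delta$ and the Young inequality, and $B(t)$ is proportional to $\|G(t)\|_{L^2}^2$. Since $G\in C^1((0,T);H^1(\Gamma))$, the function $B$ is bounded on every compact subinterval of $[0,t_0[$. Gronwall's lemma then gives at-most-exponential growth of $\|v(t)\|_{L^2}^2$ on any $[0,b]$ with $b<t_0$, contradicting the supposed divergence as $t\to b$.

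The main point to get right is not an obstacle so much as a bookkeeping check: the coercivity constant $C$ in \eqref{lem:Coercivity} is allowed to have either sign, so one must not try to ``absorb'' the second term onto the left-hand side. Gronwall nevertheless applies for any real $A$, and that is all we use. The Steklov-Poincar\'e accretivity \eqref{eq:accret} and the positivity of $N$ (hence of $\sigma_m$ up to a real constant) are therefore the only structural facts invoked, both already available from the preceding lemmas.
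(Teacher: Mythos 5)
Your proposal is correct, but it takes a genuinely different route from the paper's. Both arguments start from the same energy pairing of \eqref{eq:redODE} with $v$ and invoke the same two structural facts, namely the accretivity \eqref{eq:accret} and the lower bound \eqref{lem:Coercivity}; the difference lies in how the resulting inequality is exploited. The paper argues by contradiction: from an assumed blow-up sequence it extracts, via the mean value theorem, times $\tau_k$ with $\partial_t\|v(\tau_k)\|_{L^2}\to\infty$ (see \eqref{eq:DerUp}), and then reads off from \eqref{eq:CrucialIneq} that the right-hand side $\|G\|_{L^2}-c_m\partial_t\|v\|_{L^2}$ would tend to $-\infty$ while the left-hand side stays nonnegative. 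You instead turn the same pairing into the differential inequality $\frac{d}{dt}\|v\|_{L^2}^2\le A\|v\|_{L^2}^2+B(t)$ and close with Gronwall. Your version buys several things: working with $\|v\|_{L^2}^2$ you never need to differentiate $t\mapsto\|v(t)\|_{L^2}$, whereas the paper must restrict to a neighborhood where $v\neq 0$ to do so; you avoid the secant/mean-value selection of the sequence $\tau_k$ altogether; you obtain a quantitative, at most exponential, bound on compact subintervals rather than a bare impossibility statement; and your observation that the constant in \eqref{lem:Coercivity} may have either sign is well taken -- the paper's step ``the left-hand side stays positive'' really rests on the pointwise nonnegativity of $\sigma_m$, while Gronwall requires no sign condition at all. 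The one caveat you share with the paper is the behavior of $G$ near the endpoints: both proofs need $\|G(t)\|_{L^2}$ bounded (or at least square-integrable) up to $t=b$, which is the intended reading of the hypothesis $G\in C^1((0,T);H^1(\Gamma))$ as it is used in Theorem~\ref{thm:WellPosed}.
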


\begin{proof}
Take as an indirect assumption a blow up-sequence $\| v(t_k) \|_X\to\infty$ with $t_k\to b$. Without loss of generalization, we may choose $t_k\in [0,t_0[\;\cap \;W$, where $W$ is a neighborhood of $b$ such that $v$ is nonzero on $[0,t_0[\;\cap \;W$. Due to the $C^1$-regularity property of $v(t)$ and $v\neq 0$, the function
\[
	[0,t_0[\;\cap\; W\to\mathbb{R}:\qquad t\longmapsto\| v(t)\|_{L^2}
\]
is then continuously differentiable.

The sequence $t_k\to b(x)$ having the Cauchy property, the slope of the secants satisfies 
\[
	\frac{\left| \|v(t_{k+1})\|_-\|v(t_k)\|\right|}{t_{k+1}-t_k}\longrightarrow\infty,
	\]
as well. We then will work with a sequence $\tau_k$ such that
\beql{eq:DerUp}
	\partial_t \| v(\tau_k)\|_{L^2}\longrightarrow\infty,
\eeq
chosen by the mean-value theorem.

\bigskip

Consider equivalently to \eqref{eq:redODE} the equation
\[
	\sigma(v)v=G-C_m\partial_t v - \Lambda_c B^{-1}v.
\]
Take the $L^2$-scalar product with $v$ and take account of $\langle \partial_t v,v\rangle=\|v\|\;\partial_t\| v\|$. Then estimate the right-hand side with the Cauchy-Schwarz inequality and the accretivity property \eqref{eq:accret}:
\begin{align*}
	\langle\sigma(v)v,v\rangle_{L^2}
	&=
	\langle G,v\rangle_{L^2} -C_m\langle\partial_t v,v\rangle_{L^2} - \langle\Lambda_c B^{-1}v,v\rangle_{L^2} , \\
	&\leq \| G\|_{L^2} \| v \|_{L^2} - C_m\|v \|_{L^2} \;\partial_t\| v\|_{L^2}.
\end{align*}
Divide by $\| v \|_{L^2}$ to find
\beql{eq:CrucialIneq}
	\frac{\langle\sigma(v)v,v\rangle_{2}}{\| v\|_{L^2}}\leq \| G \|_{L^2} - C_m\partial_t \| v\|_{L^2}.
\eeq
From (\ref{lem:Coercivity}), we already know that the left-hand side stays positive.

Evaluate then expressions in inequality \eqref{eq:CrucialIneq} for the sequence $\tau_k$ in \eqref{eq:DerUp}. The result is that the right-hand side would tend to $-\infty$, which is impossible. This shows that no blow up of $v$ in $L^2$ can occur.
\end{proof}

\section{Homogenization}
\label{sec:Homogenization}

Let $\Omega$ be a bounded domain in $\mathbb{R}^2$, which carries a periodic structure made up by periodic open sets $\varepsilon Y$. The reference domain $Y=Y_i\cup Y_e\cup\Gamma$ contains a cell inside with membrane $\Gamma$, where $Y_i$ is the intracellular domain and $Y_e$ is the extracellular domain. The whole domain $\Omega$ is thus composed of
\[
	\Omega = \Omega^+\cup\Omega^-\cup\Gamma_{\varepsilon},
\]
where $\Omega^+$ is the collection of extracellular domains, $\Omega^-$ is the collection of intracellular domains and $\Gamma_{\varepsilon}$ is the collection of membranes.

We write the thickness of the membrane of the cells $\varepsilon Y$ in the form
\[
	\delta=\varepsilon\; \delta_0,
\]
where $\epsilon$ is the scale of the cell and $\delta_0$ is the reference cell membrane thickness for $Y$. 

As in \cite{laure}, we want to study behavior of the electrical field on this cell cluster and recover  features of the microscopic cell model from tissue measurements. Considering the cell model in \eqref{eq:Divergence}-\eqref{eq:BoundaryVal} and \eqref{eq:electropermeabilization}-\eqref{eq:Initial} for a domain $Y$, we first give the  model equation for $u_\varepsilon$ in $\Omega$:

\beql{eq:PeriodicSystem}
\begin{aligned}
\nabla\cdot(\sigma(x)\nabla u_{\varepsilon}(x,t)) &= 0&  \text{ }\text{in } \Omega^+ ,\\
\nabla\cdot(\sigma(x)\nabla u_{\varepsilon}(x,t)) &= 0&  \text{ }\text{in } \Omega^- ,\\
[\sigma \nabla u_{\varepsilon} \cdot\textbf{n} ] &= 0& \text{ } \text{on } \Gamma_{\varepsilon}, \\
\frac{c_m}{\delta}\frac{\partial}{\partial t} [u_{\varepsilon}] + \frac{1}{\delta}\sigma_m([u_{\varepsilon}]_M,t) [u_{\varepsilon}]_M &= \sigma {\partial_n u^-_{\varepsilon}}&  \text{ }\text{on } \Gamma_{\varepsilon}, \\
\ [u_{\varepsilon}](x,0) &= S_{\varepsilon}& \text{ } \text{on } \Gamma_{\varepsilon}, \\
\ u_{\varepsilon}(x,t) &= 0& \text{ } \text{on } \partial\Omega, \\
\end{aligned}
\eeq
where $S_{\varepsilon}(x)=\varepsilon S_1(x,\frac{x}{\varepsilon})+R(\varepsilon)$ and $\sigma_m= \sigma_{m0} + \beta N([u_{\varepsilon}],t)$. The pore density
$N([u_{\varepsilon}],t)$ is governed by \eqref{eq:PoreDensity}.

Here, in the second equation on $\Gamma_\varepsilon$, the quantity $[u_\varepsilon]_M$ is understood in the sense of the definition in  \eqref{eq:Nmod}, i.e., $[u_\varepsilon]_M=\sgn([u_\varepsilon])\min(|[u_\varepsilon]|,M)$ for a constant $M>0$.

\medskip

Given the physical observation that the voltage $v$ stays bounded, it is reasonable that for proper $M>0$, the system \eqref{eq:PeriodicSystem} is an accurate model for the real potential. Given Lemma~\ref{prop:reduction} and Theorem \ref{thm:WellPosed}, it is also well-posed.

\medskip

We want to explore the limit of the solution $u_\varepsilon$ as $\varepsilon\to 0$. For this end, we start with an energy estimate on the solution $u_\varepsilon$ which will be needed later when investigating the limit.

\begin{prop}
\begin{itemize}
\item[(i)] We have for $u_\varepsilon$ in \eqref{eq:PeriodicSystem}  the energy estimate
\beql{prop:Energy}
	\int_0^t\int_\Omega\sigma|\nabla u_\varepsilon|^2dx\;dt + \frac{1}{\varepsilon}\int_{\Gamma_\varepsilon}[u_\varepsilon]^2(x,t)dS\leq C.
\eeq
\item[(ii)] In particular, the estimate
\beql{eq:EstimateTMV}
	\int_{\Gamma_\varepsilon}[u_\varepsilon]^2 dS\leq C\varepsilon
\eeq
holds.
\end{itemize}
\end{prop}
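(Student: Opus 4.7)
The strategy is a classical energy estimate, obtained by testing the equations with $u_\varepsilon$ itself. First I would multiply the two harmonic relations $\nabla\cdot(\sigma\nabla u_\varepsilon)=0$ by $u_\varepsilon$ and integrate over $\Omega^+$ and $\Omega^-$ separately. Green's identity turns each piece into a surface integral: the part on $\partial\Omega$ vanishes because $u_\varepsilon=0$ there, while the two traces on $\Gamma_\varepsilon$ combine thanks to the flux continuity $[\sigma\nabla u_\varepsilon\cdot\mathbf{n}]=0$. Writing $\Phi_\varepsilon$ for the common normal flux $\sigma\partial_n u_\varepsilon^-=\sigma\partial_n u_\varepsilon^+$, the sum yields, up to the sign dictated by the orientation convention for $\mathbf{n}$,
\[
\int_\Omega \sigma|\nabla u_\varepsilon|^2\,dx = \int_{\Gamma_\varepsilon}\Phi_\varepsilon\,[u_\varepsilon]\,dS.
\]

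I would then substitute the transmission condition $\Phi_\varepsilon=\tfrac{c_m}{\delta}\partial_t[u_\varepsilon]+\tfrac{1}{\delta}\sigma_m([u_\varepsilon]_M,t)[u_\varepsilon]_M$ from \eqref{eq:PeriodicSystem}. The capacitive term, once paired with $[u_\varepsilon]$, produces $\tfrac{c_m}{2\delta}\tfrac{d}{dt}\!\int_{\Gamma_\varepsilon}[u_\varepsilon]^2\,dS$. The conductive term is already pointwise non-negative: from $[u_\varepsilon]_M=\sgn([u_\varepsilon])\min(|[u_\varepsilon]|,M)$ in \eqref{eq:Nmod} one has $[u_\varepsilon]_M[u_\varepsilon]\geq 0$, and $\sigma_m=\sigma_{m0}+\beta N$ is positive by Lemma~\ref{prop:PoreDensity}. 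This delivers the differential energy identity
\[
\int_\Omega \sigma|\nabla u_\varepsilon|^2\,dx + \frac{c_m}{2\delta}\frac{d}{dt}\int_{\Gamma_\varepsilon}[u_\varepsilon]^2\,dS + \frac{1}{\delta}\int_{\Gamma_\varepsilon}\sigma_m\,[u_\varepsilon]_M\,[u_\varepsilon]\,dS = 0,
\]
in which every summand other than the time derivative is non-negative.

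Integrating in time from $0$ to $t$ and discarding the cutoff integral yields
\[
\int_0^t\!\int_\Omega \sigma|\nabla u_\varepsilon|^2\,dx\,ds + \frac{c_m}{2\delta}\int_{\Gamma_\varepsilon}[u_\varepsilon]^2(x,t)\,dS \leq \frac{c_m}{2\delta}\int_{\Gamma_\varepsilon}S_\varepsilon^2\,dS.
\]
It remains to bound the right-hand side uniformly in $\varepsilon$. With $S_\varepsilon(x)=\varepsilon S_1(x,x/\varepsilon)+R(\varepsilon)$, using that the total surface satisfies $|\Gamma_\varepsilon|=O(\varepsilon^{-1})$ and that $S_1(x,\cdot)$ is $Y$-periodic and bounded, a direct computation (or periodic surface unfolding) gives $\int_{\Gamma_\varepsilon}|S_1(x,x/\varepsilon)|^2\,dS=O(\varepsilon^{-1})$, hence $\int_{\Gamma_\varepsilon}S_\varepsilon^2\,dS=O(\varepsilon)$ provided $R(\varepsilon)=O(\varepsilon)$. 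Since $\delta=\varepsilon\delta_0$, the prefactor $\tfrac{c_m}{2\delta}\cdot O(\varepsilon)$ is $O(1)$, so both terms on the left are bounded by a constant independent of $\varepsilon$. Identifying $\tfrac{1}{\delta}$ with $\tfrac{1}{\varepsilon\delta_0}$ gives (i), and (ii) follows at once by isolating the surface integral.

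The main technical point I expect to require care is precisely the $\varepsilon$-scaling of the initial trace: both the $O(\varepsilon^{-1})$ total area of $\Gamma_\varepsilon$ and the $\varepsilon$-prefactor in the initial ansatz are needed to produce the $O(\varepsilon)$ bound that balances $1/\delta$ and delivers a uniform estimate. The other small technicality is the sign bookkeeping in Green's identity, which must be arranged so that $\tfrac{d}{dt}\int[u_\varepsilon]^2$ enters with the correct (positive) coefficient and the identity becomes a genuine energy decay statement.
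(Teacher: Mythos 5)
Your proof is correct and follows essentially the same route as the paper: testing with $u_\varepsilon$, integrating by parts, using $\sigma_m\,[u_\varepsilon]_M[u_\varepsilon]\geq 0$, and bounding the initial term via $S_\varepsilon=\varepsilon S_1+R(\varepsilon)$ together with $|\Gamma_\varepsilon|=O(\varepsilon^{-1})$ and $\delta=\varepsilon\delta_0$. In fact you spell out the scaling of the right-hand side in more detail than the paper does; the only cosmetic caveat is the sign in your intermediate Green's identity, which you yourself flag and which comes out right in the final energy identity.
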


\begin{proof}
Multiply (\ref{eq:PeriodicSystem}) by $u_{\varepsilon}$, then integrate by parts to find
\begin{multline}
\int_0^t\int_\Omega\sigma|\nabla u_\varepsilon|^2dx\;dt + \frac{\alpha}{2\varepsilon}\int_{\Gamma_\varepsilon}[u_\varepsilon]^2(x,t)dS \\
+
\frac{1}{\varepsilon}\int_0^t\int_{\Gamma_\varepsilon}\sigma_m([u_{\varepsilon}]_M,\tau)[u_\varepsilon][u_\varepsilon]_M(x,t)dS\;dt =
\frac{\alpha}{2\varepsilon}\int_{\Gamma_\varepsilon}[S_\varepsilon]^2(x)dS.
\end{multline}
The statement is then derived from the fact that 
\[
	\sigma_m [u_\varepsilon][u_\varepsilon]_M \geq 0
\]
and $S_{\varepsilon}(x)=\varepsilon S_1(x,\frac{x}{\varepsilon})+o(\varepsilon)$.
\end{proof}

\bigskip

For now, let us formally assume that the solution $u_{\varepsilon}$ of \eqref{eq:PeriodicSystem} has the form
\beql{eq:AnsatzSpace}
u_{\varepsilon} ( x,t) = u_0( x,t) + \varepsilon u_1(x,\frac{ x}{\varepsilon},t)+ o(\varepsilon).
\eeq
We will calculate the equation for $u_0$ in Subsection~\ref{subsec:HomFormal} and then prove rigorously that $u_\varepsilon$ converges in an appropriate sense to $u_0$ in Subsection~\ref{subsec:HomConverg}.

\subsection{Formal calculation of the homogenization limit}
\label{subsec:HomFormal}

To find the precise form of the terms in the ansatz \eqref{eq:AnsatzSpace}, we can apply the arguments developed in \cite{AmaAndBisGia04}. For this end, it is required that for the membrane conductivity one has that
\[
	\sigma_m(0,t)=\mathrm{ constant}.
\]
(see \cite[Secs. 3.2 and 3.3]{AmaAndBisGia04}). This condition can be ensured for the model \eqref{eq:electropermeabilization}, together with \eqref{eq:PoreDensity}: From (\ref{eq:PoreDensityRep}), one can prove that $N(0,t) = N_0$, and therefore $\sigma_m(0,t)= \mathrm{ constant}$.

\bigskip

Before calculating the limit, we first give some definitions. 
Introduce the transform
\[
	T: H^{1/2}(\Gamma)\to C([0,T],H_p^1(Y)),
\]
where
\[
	H_p^1(Y)= \bigg\{ u \text{ } is \text{ } periodic \text{ } in \text{ } Y: u|_{Y_i}\in H^1(Y_i) \text{ and  } u|_{Y_e}\in H^1(Y_e), \text{ } \int_{Y}u=0 \bigg\},
\]
by
\[
	T(s)(y,t):= v(y,t)
\]
with $v$ being the solution to the following system with boundary data $s$:
\begin{align*}
\nabla\cdot(\sigma(x)\nabla v) &= 0& \text{ } \text{in } Y_i,\\
\nabla\cdot(\sigma(x)\nabla v) &= 0& \text{ } \text{in } Y_e,\\
[\sigma\nabla v \cdot\textbf{n} ] &= 0& \text{ } \text{on } \Gamma, \\
\frac{c_m}{\delta_0}\frac{\partial}{\partial t} [v] + \frac{1}{\delta_0}\sigma_m( 0,t) [v] &= \sigma {\partial_n v^-} & \text{ } \text{on } \Gamma, \\
\ [v](x,0) &= s & \text{ } \text{on } \Gamma .
\end{align*}
We define next the cell problems $\chi^0 : \Omega \rightarrow \mathbb{R}^d$ and $\chi^1 : \Omega \times (0,T) \rightarrow \mathbb{R}^d$. For this, let $\b e_h$ be the $h$-{th} unit vector in $\mathbb{R}^d$. Then the component $\chi^0_h \in H^1_p(Y)$ satisfies
\begin{align*}
\nabla\cdot(\sigma(x)\nabla \chi^0_h) &= 0 & \text{in } Y_i,\\
\nabla\cdot(\sigma(x)\nabla \chi^0_h) &= 0 & \text{in } Y_e,\\
~[\sigma (\nabla_y\chi^0_h-\b e_h)\cdot \b n]  &= 0 & \text{on } \Gamma, \\
\ [\chi^0_h](x,0) &= 0 & \text{on } \Gamma.
\end{align*}
The component
$\chi^1_h$ is defined by
\beq
\chi^1_h = T(\sigma (\nabla_y\chi^0_h-\b e_h)\cdot \b n).
\eeq
By a calculation analogous to \cite[Sec.3]{AmaAndBisGia04}, one finds that the candidate $u_0$ in equation~\eqref{eq:AnsatzSpace} satisfies
\beql{eq:FormLimit}
\dive \left[ -\sigma_0\nabla_{x} u_0 - A^0 \nabla_{x} u_0 - \int_0^t A^1(t-\tau)\nabla_{x} u_0(x,\tau)d\tau + \b F(x,t)\right] =0 .
\eeq
Here, the matrices $A^0$, $A^1,$ and $\b F(x,t)$ are defined by
\beql{eq:DefA_F}
\left\{
\begin{aligned}
\sigma_0 &= \sigma_1|Y_i| + \sigma_2|Y_e|,& \\
(A^0)_{jh} &= \int_{\Gamma} [\sigma] \chi^0_h n_j dS,&\\
(A^1)_{jh} &= \int_{\Gamma} [\sigma \chi^1_h] n_j dS,&\\
\b F &= \int_{\Gamma} [\sigma T(S_1(x,\cdot))](y,t)\;\b n\;dS,&
\end{aligned}
\right.
\eeq
where $\sigma_i=\sigma|_{Y_i}$ and $\sigma_e=\sigma|_{Y_e}$,
with $\chi_h^0, \chi_h^1$ and $T$ given above.

\subsection{Convergence}
\label{subsec:HomConverg}
While in Subsection \ref{subsec:HomFormal}, we derived the formal limit \eqref{eq:FormLimit} for the ansatz of the asymptotic expansion \eqref{eq:AnsatzSpace}, we now state its convergence properties.

\begin{theorem}
\label{thm:Convergence}
For the periodic solution $u_\varepsilon$ in~\eqref{eq:PeriodicSystem} and the homogenized solution $u_0$ in~\eqref{eq:FormLimit}, we have the convergence \[
	u_\varepsilon\to u_0
\]
weakly in $L^2([0,T]\times\Omega)$ and strongly in $L^1_{\mathrm{loc}}([0,T],\Omega)$.
\end{theorem}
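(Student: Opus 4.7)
The plan is to extend the homogenization framework of \cite{AmaAndBisGia04} to the nonlinear membrane problem \eqref{eq:PeriodicSystem}, using the cutoff $M$ together with the global Lipschitz property from Lemma~\ref{lem:Lipschitz} to keep the nonlinear term under control in the two-scale limit. First I would exploit the energy estimate \eqref{prop:Energy}: since $\sigma$ is bounded below, $\nabla u_\varepsilon$ is uniformly bounded in $L^2((0,T)\times\Omega)$, and together with the Dirichlet condition on $\partial\Omega$ and Poincar\'e's inequality this yields a uniform bound on $u_\varepsilon$ in $L^2((0,T);H^1_0(\Omega))$. Extracting a subsequence gives $u_\varepsilon\rightharpoonup u^*$ weakly in that space. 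The supplementary estimate \eqref{eq:EstimateTMV} yields $\|[u_\varepsilon]\|_{L^2(\Gamma_\varepsilon)}=O(\sqrt\varepsilon)$, so the membrane jumps disappear in the two-scale sense.

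Second, I would apply two-scale convergence (or the periodic unfolding method) to obtain a corrector: there exists $u_1\in L^2((0,T)\times\Omega;H^1_{\mathrm{per}}(Y)/\mathbb{R})$ such that $\nabla u_\varepsilon$ two-scale converges to $\nabla_x u^*+\nabla_y u_1$. To identify $u_1$ and to pass to the limit in the macroscopic equation, I would test the weak formulation of \eqref{eq:PeriodicSystem} against oscillating test functions of the form
\[
\varphi_\varepsilon(x,t)=\varphi(x,t)+\varepsilon\Bigl(\chi^0_h(x/\varepsilon)\,\partial_{x_h}\varphi(x,t)+\int_0^t\chi^1_h(x/\varepsilon,t-\tau)\,\partial_{x_h}\varphi(x,\tau)\,d\tau\Bigr),
\]
so that the jump contributions on $\Gamma_\varepsilon$ reproduce the systems defining $\chi^0_h$ and $\chi^1_h$ and generate in the limit the matrices $A^0,A^1$ in \eqref{eq:DefA_F} together with the memory kernel. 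The crucial observation is that, since $[u_\varepsilon]_M\to 0$ strongly on $\Gamma_\varepsilon$ by \eqref{eq:EstimateTMV} and $\sigma_m(\,\cdot\,,t)$ is Lipschitz (Lemma~\ref{lem:Lipschitz}), the coefficient $\sigma_m([u_\varepsilon]_M,t)$ tends to $\sigma_m(0,t)=\mathrm{const}$, which is exactly the linearization hypothesis that makes the computations of Subsection~\ref{subsec:HomFormal} valid. This yields \eqref{eq:FormLimit}, and by well-posedness of the homogenized equation $u^*=u_0$, so the full sequence converges weakly in $L^2$.

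For the strong $L^1_{\mathrm{loc}}$ statement, I would invoke an Aubin--Lions type argument: combining the uniform bound in $L^2((0,T);H^1_0(\Omega))$ with a bound on $\partial_t u_\varepsilon$ in a suitable negative Sobolev space on $\Omega$ (obtained by testing \eqref{eq:PeriodicSystem} against smooth compactly supported functions and re-using \eqref{prop:Energy}), one gets relative compactness in $L^2((0,T);L^2_{\mathrm{loc}}(\Omega))$, which embeds into $L^1_{\mathrm{loc}}((0,T)\times\Omega)$.

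\textbf{Main obstacle.} The hard part is passing to the limit in the membrane term $\tfrac{1}{\varepsilon}\sigma_m([u_\varepsilon]_M,t)[u_\varepsilon]_M$. The prefactor $1/\varepsilon$ combined with jumps of size $O(\sqrt\varepsilon)$ means that naive boundedness is not enough; one must exploit the two-scale structure of the test functions (including the time-convolution against $\chi^1$, which is what produces the memory kernel $\int_0^t A^1(t-\tau)\nabla_xu_0\,d\tau$ in the effective equation) and simultaneously control the nonlinear remainder $\sigma_m([u_\varepsilon]_M,t)-\sigma_m(0,t)$ by the Lipschitz constant of Lemma~\ref{lem:Lipschitz} and the $O(\sqrt\varepsilon)$-smallness of $[u_\varepsilon]_M$. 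Making this asymptotic linearization quantitative enough to integrate against $\chi^1_h$ on $\Gamma_\varepsilon\times(0,T)$ is the real technical point.
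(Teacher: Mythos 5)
Your overall route is the one the paper follows (it is the scheme of \cite{AmaAndBisGia04}): energy estimate, oscillating test functions built from $\chi^0_h$ and $\chi^1_h$ whose time convolution produces the memory kernel, and a linearization of the membrane nonlinearity around $[u_\varepsilon]=0$. But two steps of your proposal have genuine gaps. First, the functional setting: $u_\varepsilon$ jumps across $\Gamma_\varepsilon$, so it does \emph{not} lie in $H^1_0(\Omega)$, and the claimed uniform bound in $L^2((0,T);H^1_0(\Omega))$ via Poincar\'e is false as stated; the energy estimate only controls $\nabla u_\varepsilon$ in $\Omega\setminus\Gamma_\varepsilon$ together with $\frac1\varepsilon\int_{\Gamma_\varepsilon}[u_\varepsilon]^2$. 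For the same reason your Aubin--Lions argument for the strong $L^1_{\mathrm{loc}}$ convergence does not apply: there is no bulk time derivative in \eqref{eq:PeriodicSystem} ($\partial_t$ acts only on the membrane jump), and the spatial compactness has to come from a compactness result for broken Sobolev functions whose jumps across the periodic membranes are $O(\sqrt\varepsilon)$ in $L^2(\Gamma_\varepsilon)$ — this is exactly what the estimate \eqref{eq:EstimateTMV} is for in the framework of \cite{AmaAndBisGia04}, and it is the route the paper takes in \eqref{eq:Limits}.

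Second, and more importantly, the point you flag as ``the real technical point'' is precisely the one new ingredient the paper supplies, and your proposal does not actually carry it out. Saying that $[u_\varepsilon]_M\to 0$ strongly so that $\sigma_m([u_\varepsilon]_M,t)\to\sigma_m(0,t)$ is not sufficient against the prefactor $1/\varepsilon$. The paper's Lemma~\ref{lem:K3} resolves this quantitatively: using the explicit representation \eqref{eq:PoreDensityRep} and the cutoff, one gets the pointwise Lipschitz bound $|N([u_\varepsilon]_M,t)-N(0,t)|^2\leq L(M)\int_0^t[u_\varepsilon]_M^2\,ds$ and the uniform bound $\sigma_m([u_\varepsilon]_M,t)\leq C(M)$, which combined with \eqref{eq:EstimateTMV} give
\begin{equation*}
\int_0^T\!\!\int_{\Gamma_\varepsilon}\bigl|\sigma_m(0,t)[u_\varepsilon]-\sigma_m([u_\varepsilon]_M,t)[u_\varepsilon]_M\bigr|\,dS\,dt\leq C(M)\,\varepsilon .
\end{equation*}
Since the jump of the oscillating test function $w_h^\varepsilon$ is itself $O(\varepsilon)$, the remainder term $K_{3\varepsilon}=\frac1\varepsilon\int_0^T\int_{\Gamma_\varepsilon}(\sigma_m(0,t)[u_\varepsilon]-\sigma_m([u_\varepsilon]_M,t)[u_\varepsilon]_M)[w_h^\varepsilon]\varphi\,dS\,dt$ vanishes in the limit, and the rest of the passage to the limit is verbatim as in \cite{AmaAndBisGia04}. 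Without this estimate (or an equivalent quantitative linearization), your two-scale/unfolding version of the argument cannot close; with it, your plan and the paper's proof coincide in substance, the only cosmetic difference being test-function expansion versus the subtraction of the cell-problem weak formulations in \eqref{eq:IsolationK}.
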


The proof relies on arguments developed in \cite{AmaAndBisGia04}. For the sake of a  readability, we outline them in the appendix, and only prove here the crucial lemma needed for their adaption to our case.

\begin{lem}
\label{lem:K3}
For $M>1$, there exists a constant $C(M)$ such that
\beql{eq:AuxEstimateK3}
	\int_0^T\!\int_{\Gamma_\varepsilon} |\sigma_m( 0,t)[u_\varepsilon]-\sigma_m([u_\varepsilon]_M,t)\;[u_\varepsilon]_M\;dS\;| dt\leq C\varepsilon.
\eeq
\end{lem}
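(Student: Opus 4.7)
The plan is to split the integrand into two pieces, one measuring the defect of the cutoff and one measuring the defect of the nonlinearity, and to treat each with Cauchy--Schwarz together with the energy bound \eqref{eq:EstimateTMV}.

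First I would write
\[
\sigma_m(0,t)[u_\varepsilon]-\sigma_m([u_\varepsilon]_M,t)[u_\varepsilon]_M = A_1 + A_2,
\]
where $A_1=\sigma_m(0,t)\bigl([u_\varepsilon]-[u_\varepsilon]_M\bigr)$ isolates the truncation error and $A_2=\bigl(\sigma_m(0,t)-\sigma_m([u_\varepsilon]_M,t)\bigr)[u_\varepsilon]_M$ isolates the nonlinear perturbation of the membrane conductivity. Observe that $\sigma_m(0,t)=\sigma_{m0}+\beta N_0$ is a bounded constant (by the calculation showing $N(0,t)=N_0$), so the prefactors are harmless.

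For $A_1$, note that $|[u_\varepsilon]-[u_\varepsilon]_M|=(|[u_\varepsilon]|-M)_+$ vanishes on $\{|[u_\varepsilon]|\le M\}$. Chebyshev's inequality applied to \eqref{eq:EstimateTMV} gives, at each fixed $t$,
\[
\bigl|\{x\in\Gamma_\varepsilon:|[u_\varepsilon]|>M\}\bigr|\le \frac{1}{M^2}\int_{\Gamma_\varepsilon}[u_\varepsilon]^2\,dS\le \frac{C\varepsilon}{M^2},
\]
and Cauchy--Schwarz on the bad set together with \eqref{eq:EstimateTMV} then yields $\int_{\Gamma_\varepsilon}|A_1|\,dS\le C\varepsilon/M$. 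Integrating over $t\in(0,T)$ and using $M>1$ delivers the desired $O(\varepsilon)$ bound on the $A_1$-part.

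For $A_2$, I would rewrite the difference via the pore density as
\[
\sigma_m(0,t)-\sigma_m([u_\varepsilon]_M,t)=\beta\bigl(N_0-N_M([u_\varepsilon],t)\bigr),
\]
and invoke the Lipschitz estimate established in the proof of Lemma \ref{lem:Lipschitz}, which is pointwise in $x\in\Gamma_\varepsilon$,
\[
|N_M([u_\varepsilon],t)-N_0|^2\le L(M)\int_0^t [u_\varepsilon]^2(x,s)\,ds.
\]
Using $|[u_\varepsilon]_M|\le|[u_\varepsilon]|$ and Cauchy--Schwarz in the surface integral gives, for every $t\in[0,T]$,
\[
\int_{\Gamma_\varepsilon}|A_2|\,dS\le C(M)\,\Bigl(\int_{\Gamma_\varepsilon}[u_\varepsilon]^2\,dS\Bigr)^{1/2}\Bigl(\int_{\Gamma_\varepsilon}\!\int_0^T[u_\varepsilon]^2(x,s)\,ds\,dS\Bigr)^{1/2}\le C(M,T)\,\varepsilon,
\]
where both factors in the product are controlled by $(C\varepsilon)^{1/2}$ and $(CT\varepsilon)^{1/2}$ thanks to \eqref{eq:EstimateTMV}. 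A final integration in $t$ and addition to the $A_1$-estimate yields \eqref{eq:AuxEstimateK3}.

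The main obstacle I anticipate is the delicate scaling: the total membrane area satisfies $|\Gamma_\varepsilon|\sim\varepsilon^{-1}$, so a naive application of Cauchy--Schwarz that brings in $|\Gamma_\varepsilon|^{1/2}$ destroys the smallness. The key is that the Lipschitz bound on $N_M$ can be paired with $|[u_\varepsilon]_M|\le|[u_\varepsilon]|$ so that the energy estimate \eqref{eq:EstimateTMV} enters twice, producing a net factor of $\varepsilon$ rather than $\varepsilon\cdot|\Gamma_\varepsilon|^{1/2}=O(\sqrt\varepsilon)$.
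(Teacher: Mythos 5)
Your proposal is correct and follows essentially the same route as the paper: split off the conductivity difference (controlled via the pointwise Lipschitz bound on $N$ coming from the explicit representation \eqref{eq:PoreDensityRep}) from the cutoff defect $[u_\varepsilon]-[u_\varepsilon]_M$, and make the energy estimate \eqref{eq:EstimateTMV} enter quadratically through Cauchy--Schwarz so that the factor $\varepsilon$ survives. The only (harmless) deviations are that you pair the truncation defect with the constant $\sigma_m(0,t)$ rather than with $\sigma_m([u_\varepsilon]_M,t)$, and bound it by Chebyshev plus Cauchy--Schwarz where the paper uses the elementary inequality $|[u_\varepsilon]-[u_\varepsilon]_M|\le [u_\varepsilon]^2$ valid for $M>1$.
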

\begin{proof}
We have 
\ben
& &\sigma_m( 0,t)[u_\varepsilon]-\sigma_m([u_\varepsilon]_M,t)\;[u_\varepsilon]_M \\
&= &\sigma_m( 0,t)[u_\varepsilon]-\sigma_m([u_\varepsilon]_M,t)\;[u_\varepsilon] + \sigma_m([u_\varepsilon]_M,t)\;([u_\varepsilon]-[u_\varepsilon]_M).
\een
By the explicit form of $N(v,t)$ in \eqref{eq:PoreDensityRep} and $|v_M|_{L^\infty}\leq M$, there exists a constant $L(M)$ such that
\beq
|N([u_\varepsilon]_M,t)-N(0,t)|^2\leq L(M) \int_0^t [u_\varepsilon]^2_Mds,
\eeq
and $\sigma_m([u_\varepsilon]_M,t)\leq C(M)$.

Together with the fact that $\left|\int_0^T [u_\varepsilon]-[u_\varepsilon]_M \;ds\right| \leq \int_0^T [u_\varepsilon]^2 ds$, we can thus 
conclude that
\[
	\int_0^T\!\int_{\Gamma_\varepsilon} \bigl|\sigma_m( 0,t)[u_\varepsilon]-\sigma_m([u_\varepsilon]_M,t)\;[u_\varepsilon]_M\bigr|\;dS\; dt\leq C(M) \varepsilon.
\]
The lemma then follows by the energy estimate \eqref{eq:EstimateTMV}.
\end{proof}

\begin{table}
\centering
\begin{tabular}{ l c r }
\hline
  Symbol & Value & Definition \\
  \hline
  $\sigma_i$  &  0.455  & intracellular conductivity \\
  $\sigma_e$  &  5   &  extracellular conductivity  \\
  $L$ & $2\times 10^{-4}$   &  computation domain size \\
  $r$ & $0.5\times 10^{-4}$  &  cell radius \\
  $\delta$ & $5\times 10^{-9}$  & membrane thickness \\
  $r_p$ & 0.76 & pore radius \\
  $\sigma_p$ &  0.0746 &  pore conductivity \\
  $V_{\mathrm{ep}}$     &   0.258  &  characteristic voltage of electropermeabilization \\
  $\alpha$  &  $10^9 $   &   electropermeabilization parameter   \\
  $N_0$    &  $1.5\times 10^{9}$   &   equilibrium pore density  \\
  $c_m$   &   $9.5\times10^{-12}$ &   membrane capacitance\\
  \hline
\end{tabular}
\vspace{0.5cm}
\caption{Model parameters used for the numerical computations.}
\label{tab:Parameters}
\end{table}

\section{Numerical experiments}
\label{sec:Illustration}
In the preceding section, we have modeled macroscopic processes as homogenized quantities with specific effective material parameters. In this section we show the sensitivity of the effective parameters to microscopic properties relevant in electropermeabilization.

We use FEM with mesh generator \cite{PerStr04} to implement all the numerical simulations. We present the numerical experiments from two aspects: First we will simulate the single cell model \eqref{eq:redODE} and show the electropermeabilization at cell level. Next we show how the microscopic parameters affect effective parameters and anisotropy properties in the homogenized model \eqref{eq:FormLimit}.

\subsection{Electropermeabilization simulation for a single cell}
We simulate the single cell model \eqref{eq:redODE} in a square domain $[0,L]\times[0,L]$, the cell is a circular in the center of the square with cell radius $r$. The parameter $\beta$ in \eqref{eq:PoreDensity} is given by 
\beq
\beta=\frac{2\pi r^2_p \sigma_p \delta}{\pi r_p+2\delta}.
\eeq
All the parameters are given in Table \ref{tab:Parameters}. Figure \ref{fig:CellModel} shows the results for the time evolution and the voltage after $2~\mu s'$.

\begin{figure}
\centering
\subfigure{\includegraphics[width=7.5cm]{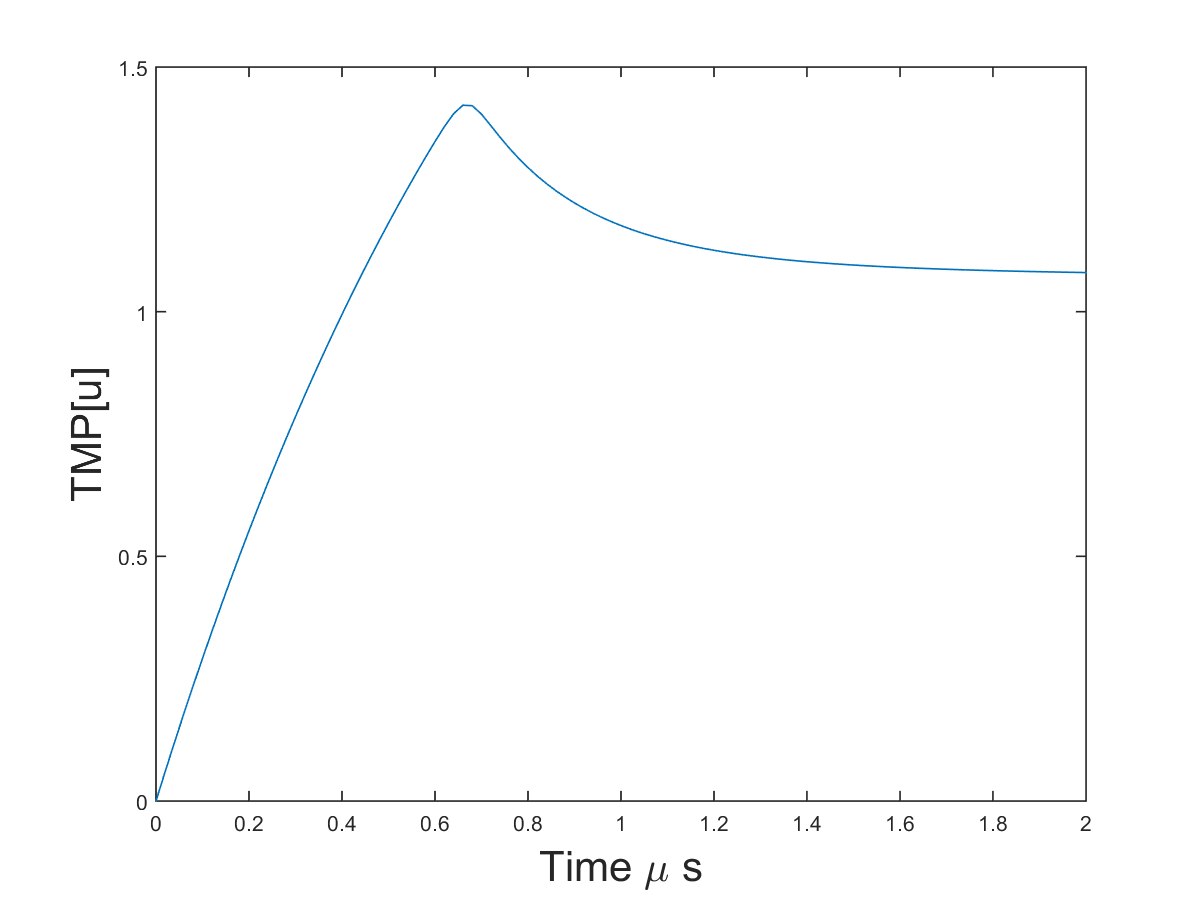}}
\subfigure{\includegraphics[width=7.5cm]{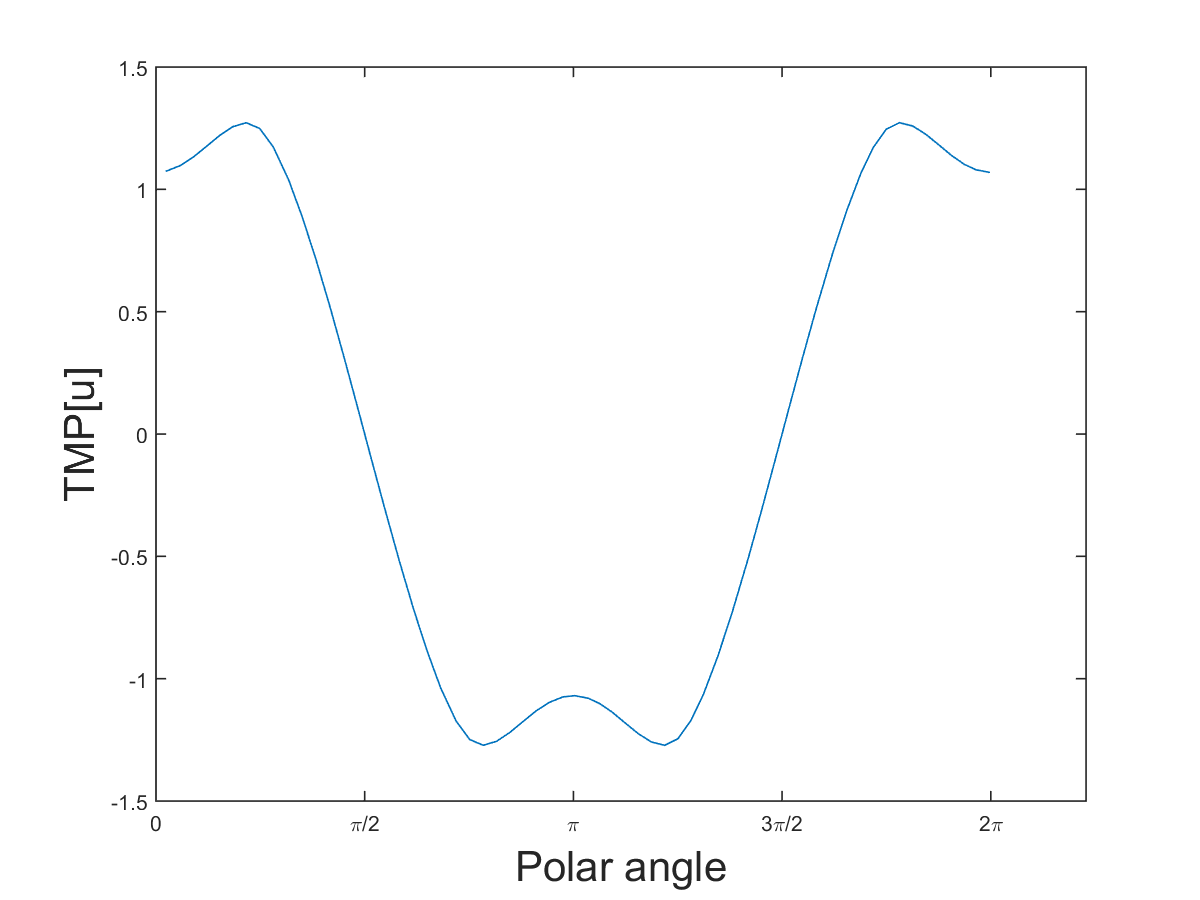}}
\caption{\textbf{(a)}~Evolution of the transmembrane potential (TMP) $v$ at the pole of the cell. \textbf{(b)}~TMP along the cell membrane after 2 $\mu s'$. }
\label{fig:CellModel}
\end{figure}

\subsection{Homogenization for electropermeabilization model}
In this section, we show the sensitivity of the effective parameters $\sigma_0$, $A^0$, and $A^1$ in \eqref{eq:FormLimit} to
\begin{itemize}
\item the conductivities $\sigma_o$ and $\sigma_i$;
\item the shape of the cell with membrane $\Gamma$;
\item the volume fraction $f=\frac{\vol (Y_i)}{\vol(Y)}$;
\item the lattice of the cells in the domain $\Omega$. \end{itemize}
We perform four experiments, the results of which are found in Table \ref{tab:Sensitivity}.
\medskip

\begin{figure}
\centering
\includegraphics[width=10cm]{\PicPath 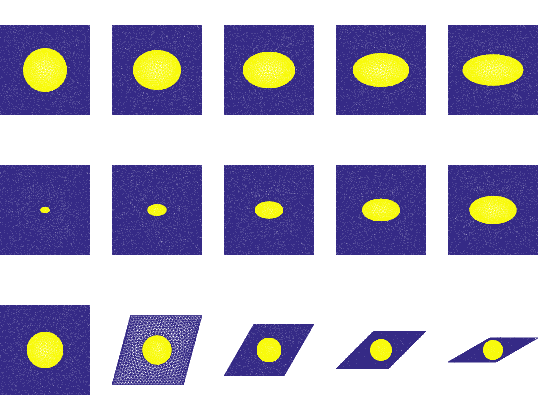}
\caption{Cell shapes used in numerical examples (see text and Table~\ref{tab:Sensitivity}). Example~1 uses the first mesh. Example~2 uses the cells in the first row. Example~3 uses the cells in the second row. Example 4 uses the cells in the last row.}
\label{fig:Meshes}
\end{figure}

\textbf{Example 1.} We fix the shape and size of the cell and change the ratio of the interior and exterior conductivities $\sigma_i$ and $\sigma_e$.

\textbf{Example 2.} In this example, we show how the shape of the cell membrane produces different effective anisotropy properties. We fix conductivities and the volume fraction of the cell, but take as cell shapes ellipses with different excentricity $a/b$.

\textbf{Example 3.} 
We investigate the effect of different volume fractions of a cell with the same shape. 

\textbf{Example 4.} 
In this example, we show how the angle of the lattice in which the cells are arranged affects the effective parameters.

\medskip

For all these experiments, Table \ref{tab:Sensitivity} presents the reactions of the effective conductivity $\sigma_0$ and the effective anisotropy properties $A^0$ and $A^1(0)$ to the microscopical change. One sees clearly that  $\sigma_0$, as well as $A^0$ and $A^1$ react to a change of cell and conductivity parameters. Most of the sensitivity functions are in fact monotonic.

The best contrast is seen in:
\begin{itemize}
\item the reaction of $\sigma_0$ to the change in conductivity $\sigma_i/\sigma_e$ and to a change in the lattice angle $\phi$;
\item the reaction of both $A^0$ and $A^1$ to the cell shape.
\end{itemize}
The volume fraction alone does not show so much contrast in the anisotropy of the effective parameters. 

\medskip

Given the results of the sensitivity analysis,  it is promising to infer shape parameters  from macroscopic effective properties in electropermeabilization, as it was done in \cite{laure} from multifrequency admittivity measurements.

\begin{table}
\begin{tabular}{ccc}
effective conductivity $\sigma_0$ & eigenvalues $\lambda_1/\lambda_2$ of $A^0$ & eigenvalues $\lambda_1/\lambda_2$ of $A^1(0)$. 
\vspace{0.05cm}\\
\hline
\hline

\\
\multicolumn{3}{l}{Example 1: Difference in conductivity (ratio $\sigma_i/\sigma_e$ of interior and exterior conductivity).} \\
 \includegraphics[width=5cm]{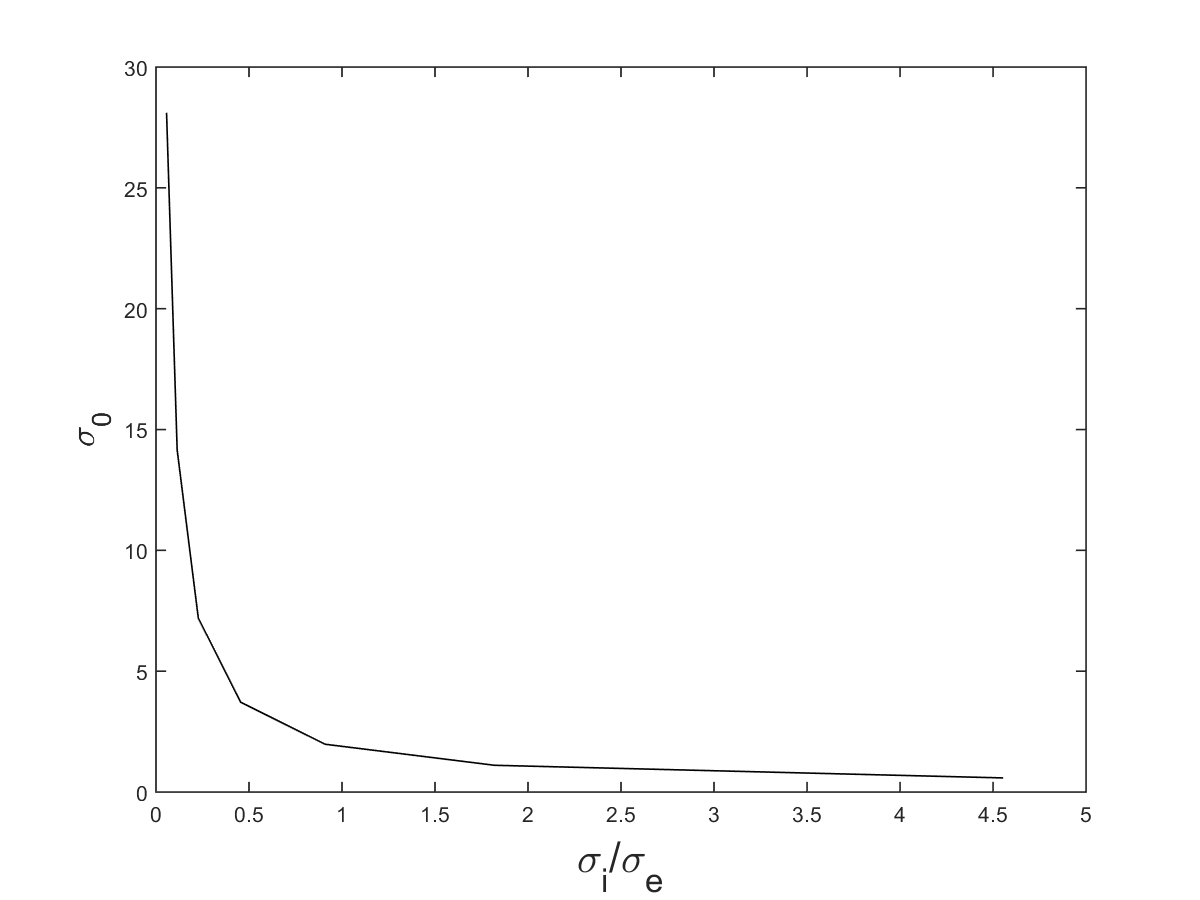}
& \includegraphics[width=5cm]{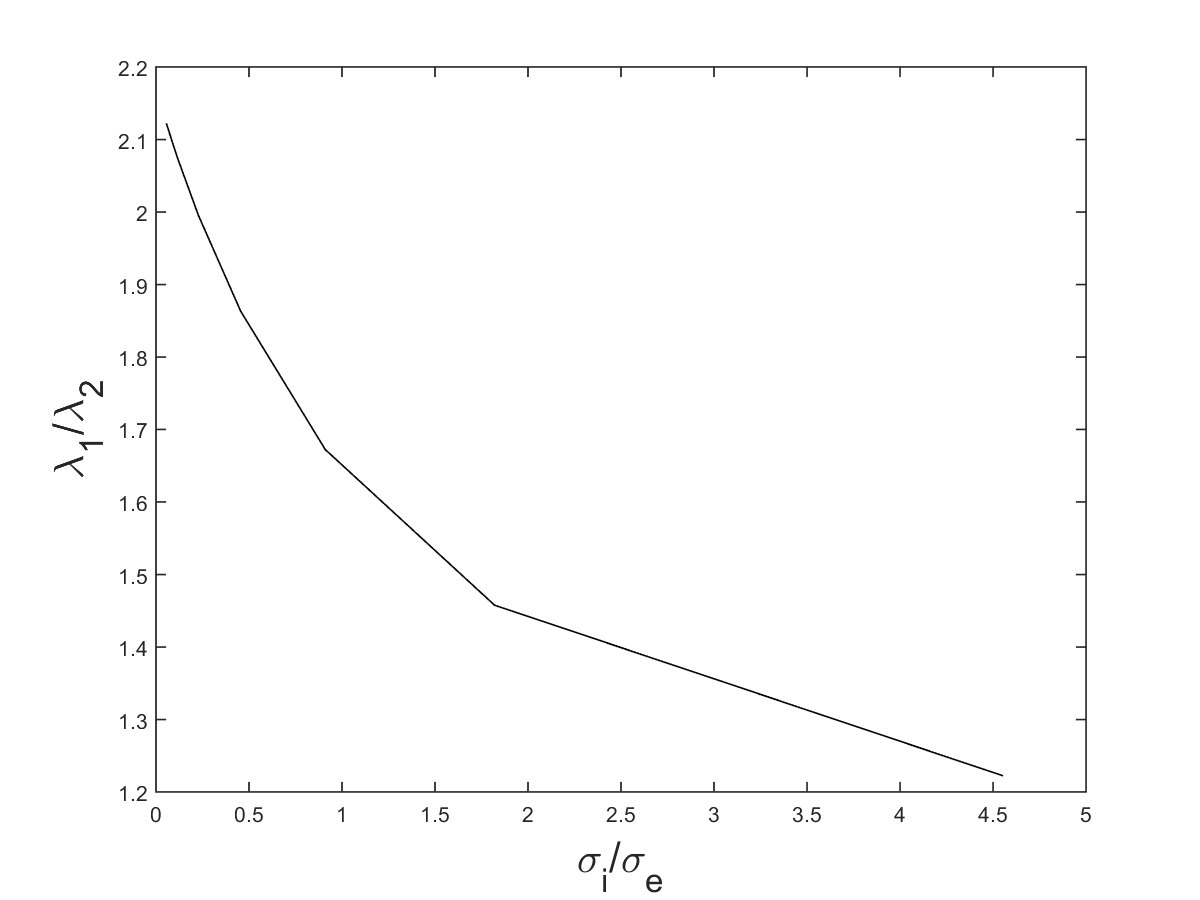}
& \includegraphics[width=5cm]{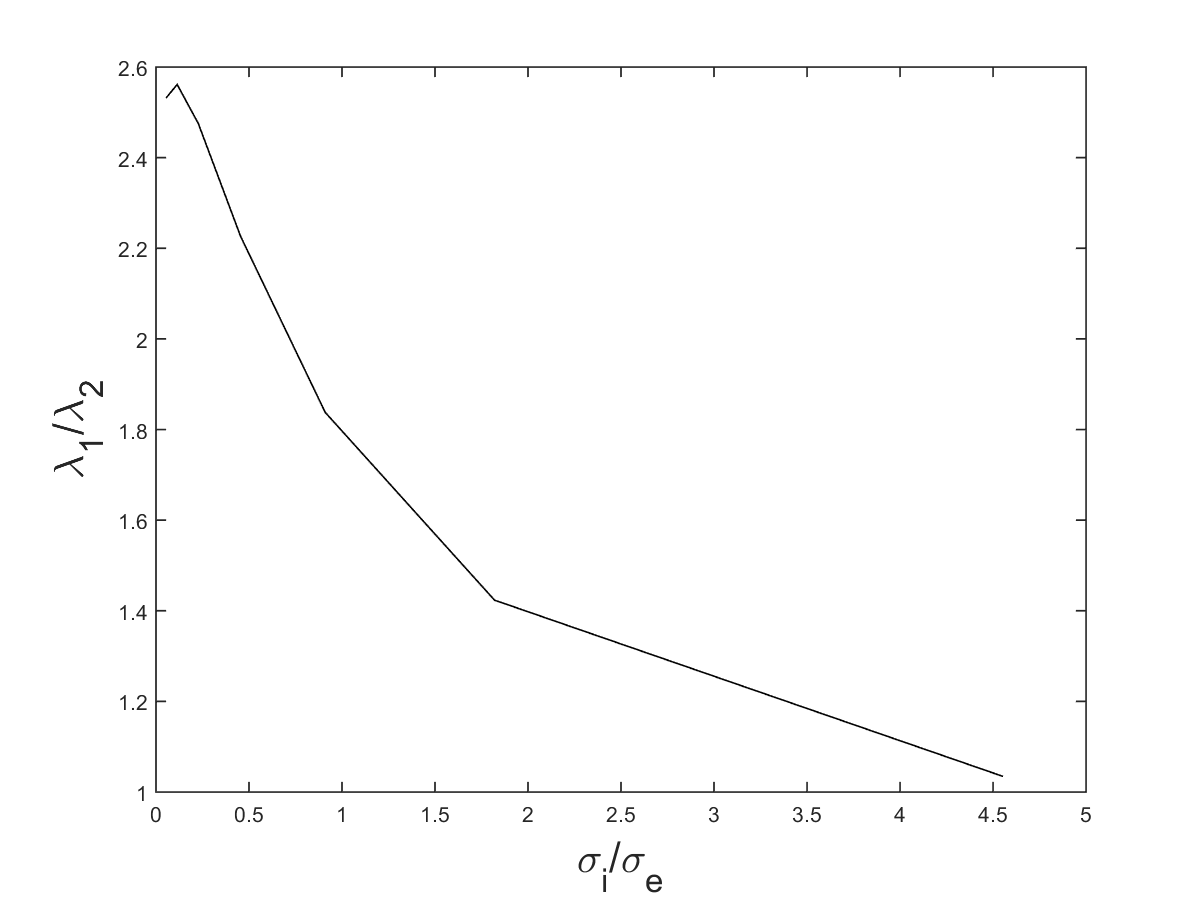}
\\

\hline

 \\

\multicolumn{3}{l}{Example 2: Difference in cell shape: change of the excentricity $a/b$ (see Fig.~\ref{fig:Meshes}, 1st row).} \\
 \includegraphics[width=5cm]{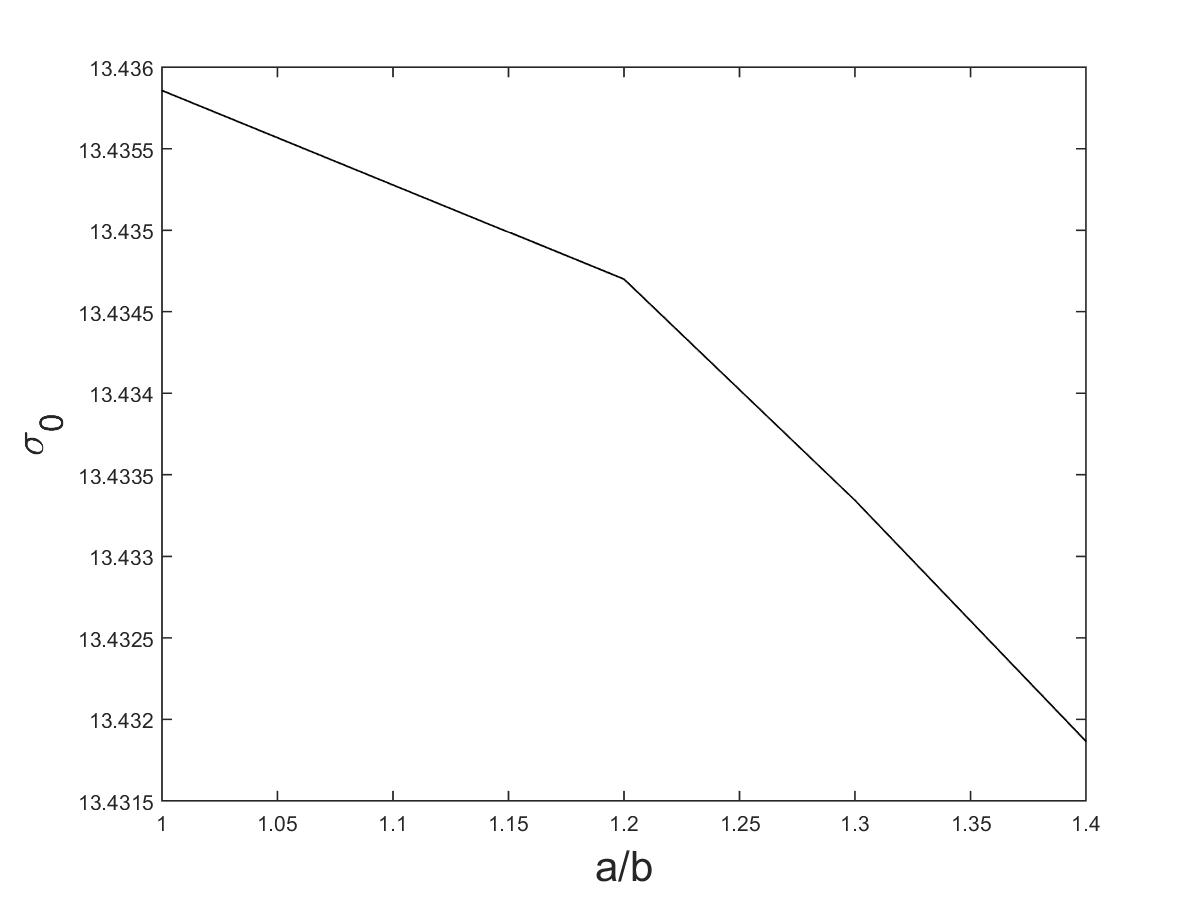}
& \includegraphics[width=5cm]{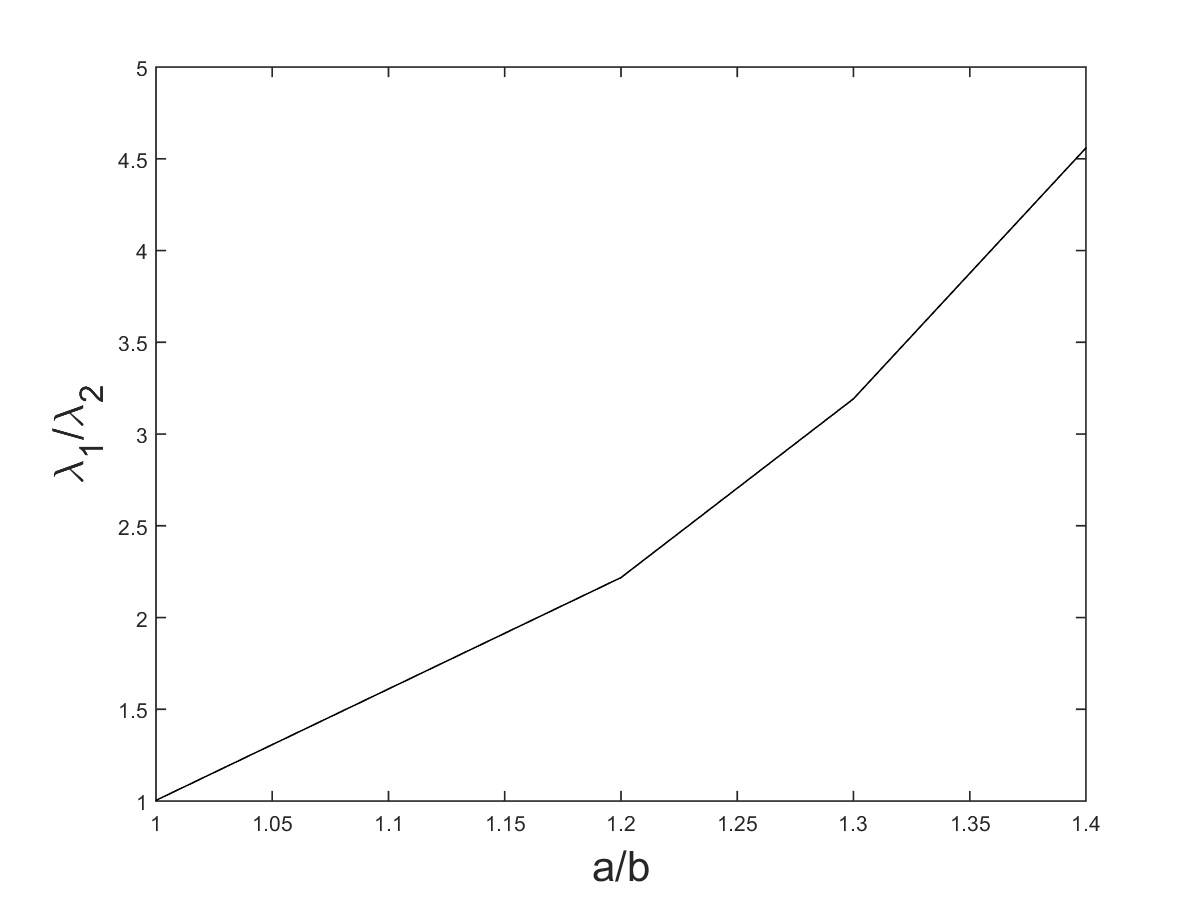}
& \includegraphics[width=5cm]{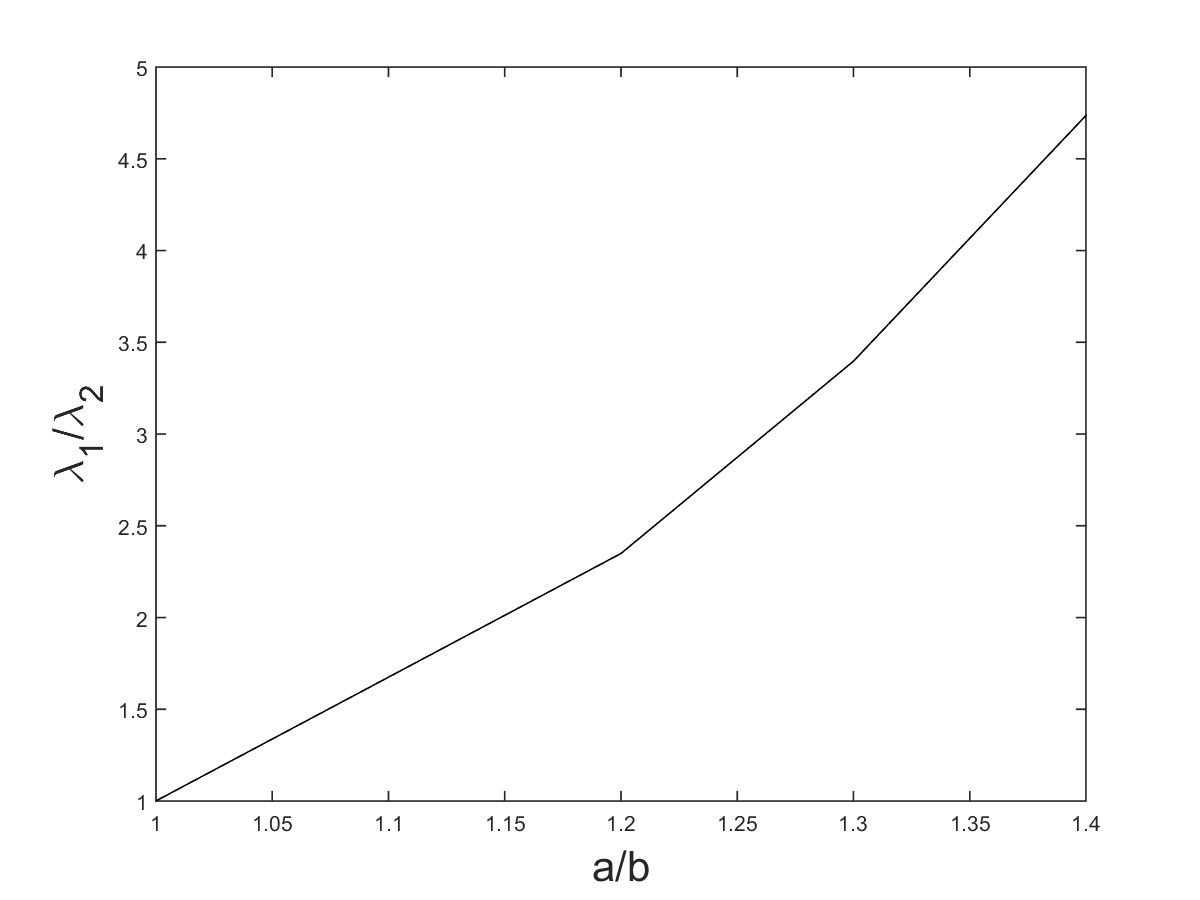}
\\

\hline

\\

\multicolumn{3}{l}{Example 3: Difference in volume fraction of the cells (see Fig.~\ref{fig:Meshes}, 2nd row).} \\
 \includegraphics[width=5cm]{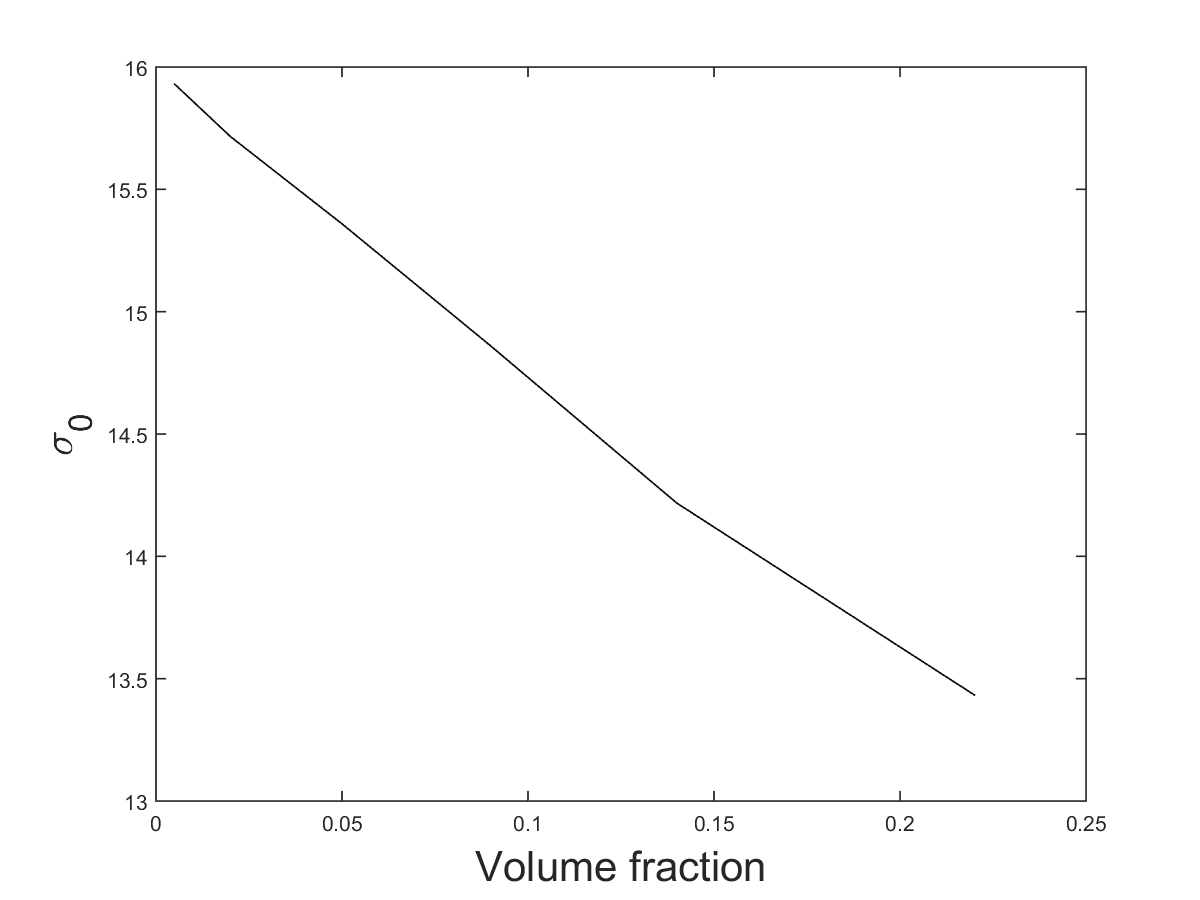}
& \includegraphics[width=5cm]{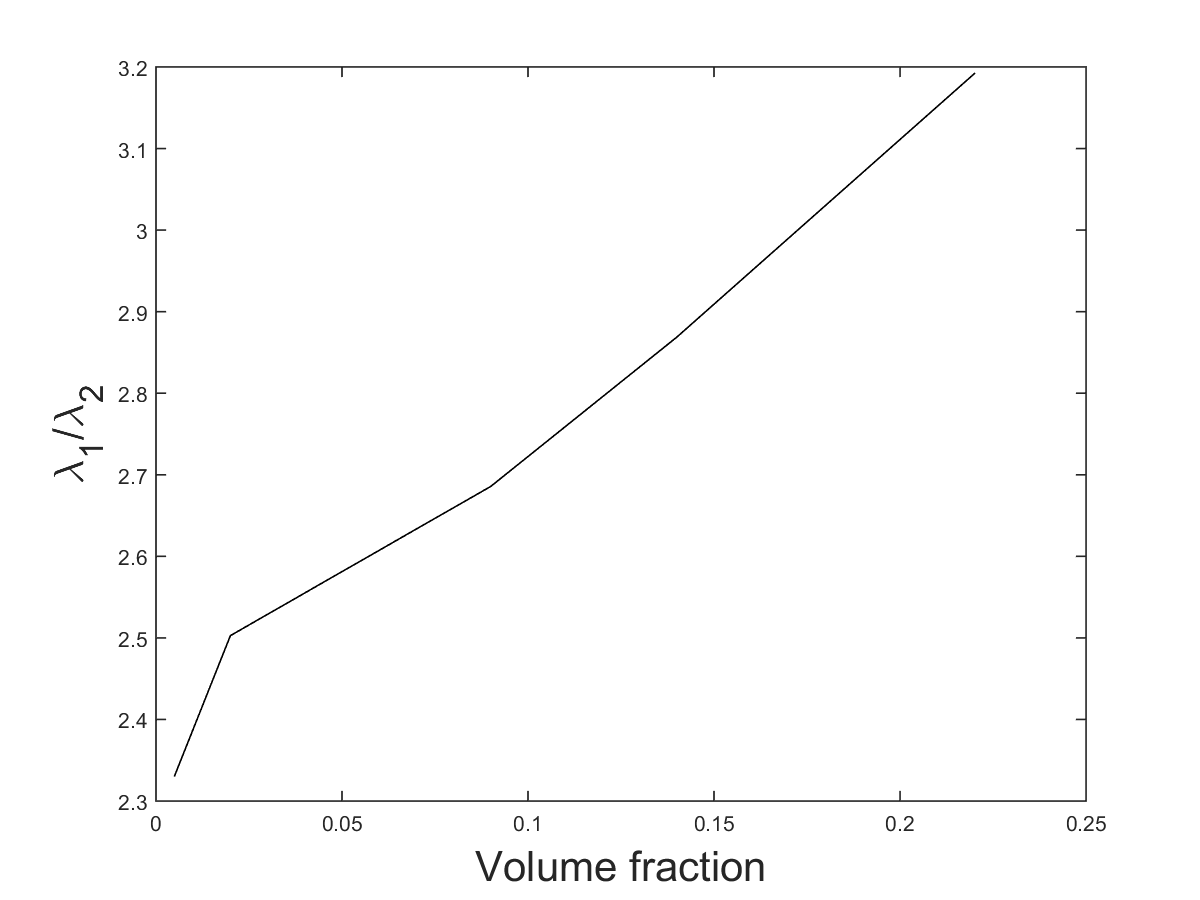}
& \includegraphics[width=5cm]{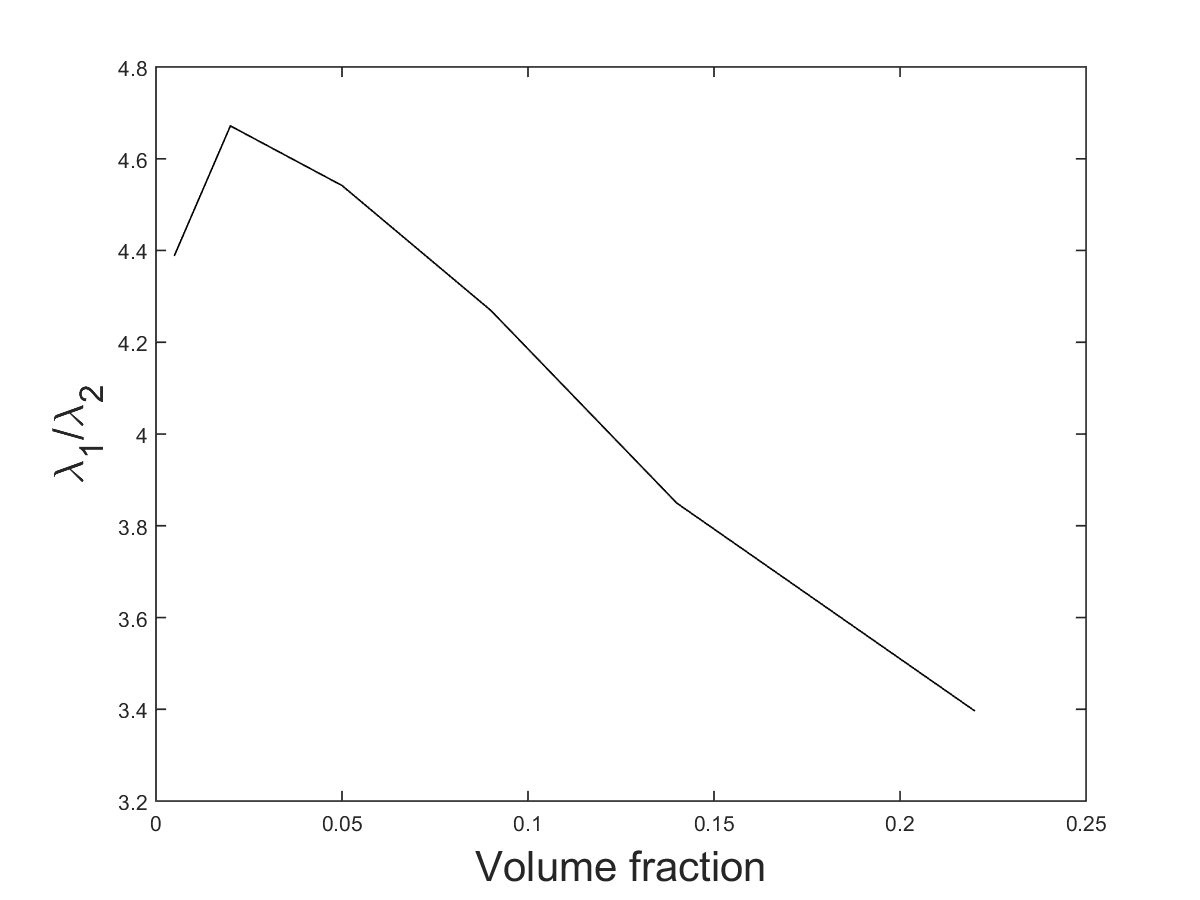}
\\

\hline

\vspace{0.05cm} \\

\multicolumn{3}{l}{Example 4: Difference in angle $\phi$ of the lattice arrangement (see Fig.~\ref{fig:Meshes}, 3rd row).} \\
 \includegraphics[width=5cm]{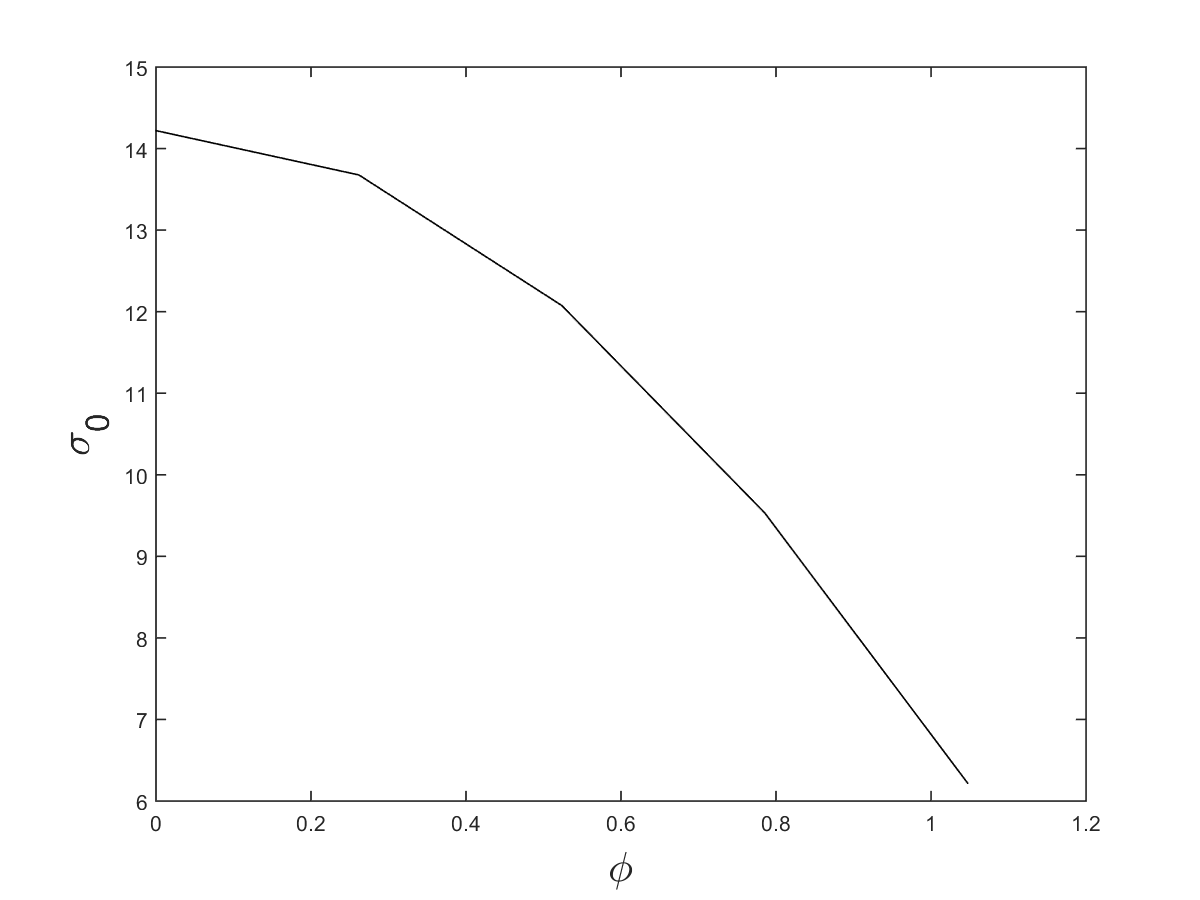}
& \includegraphics[width=5cm]{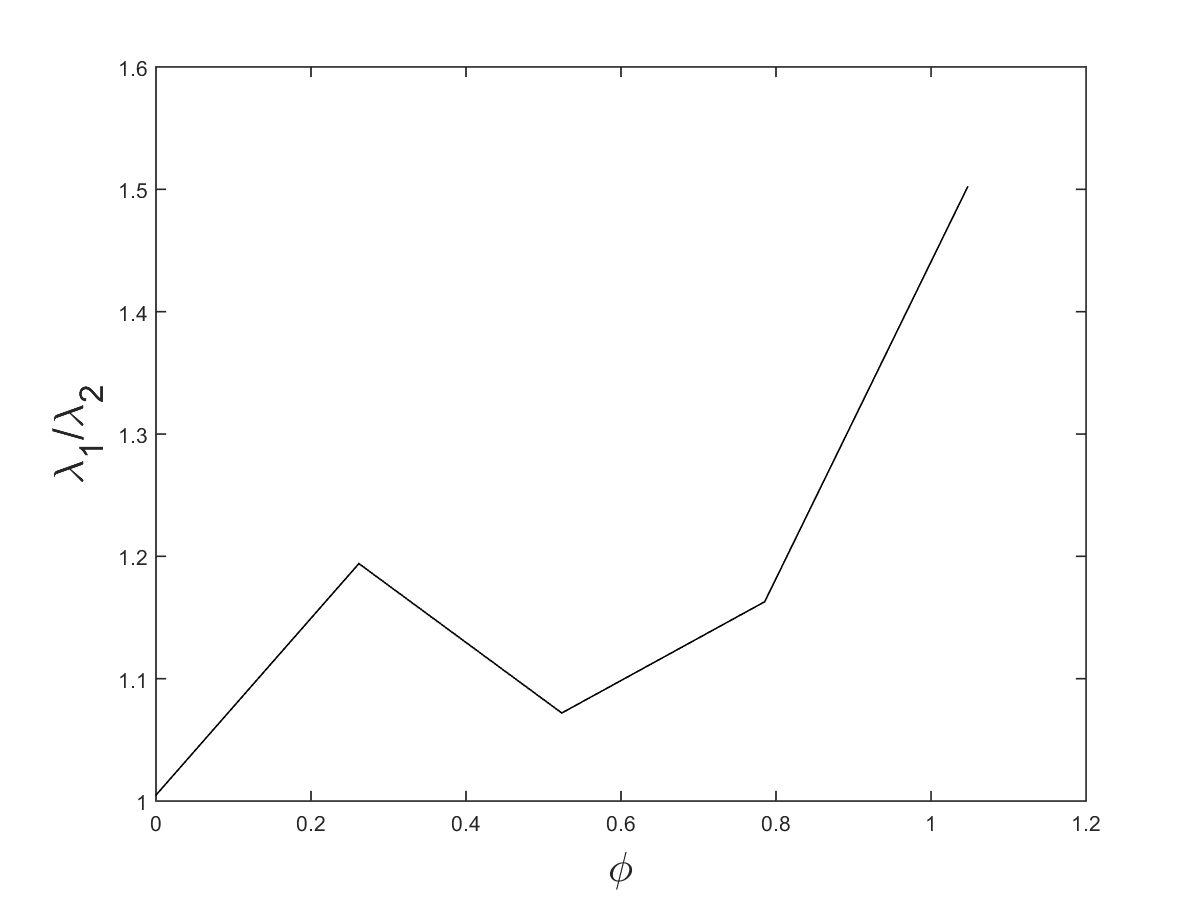}
& \includegraphics[width=5cm]{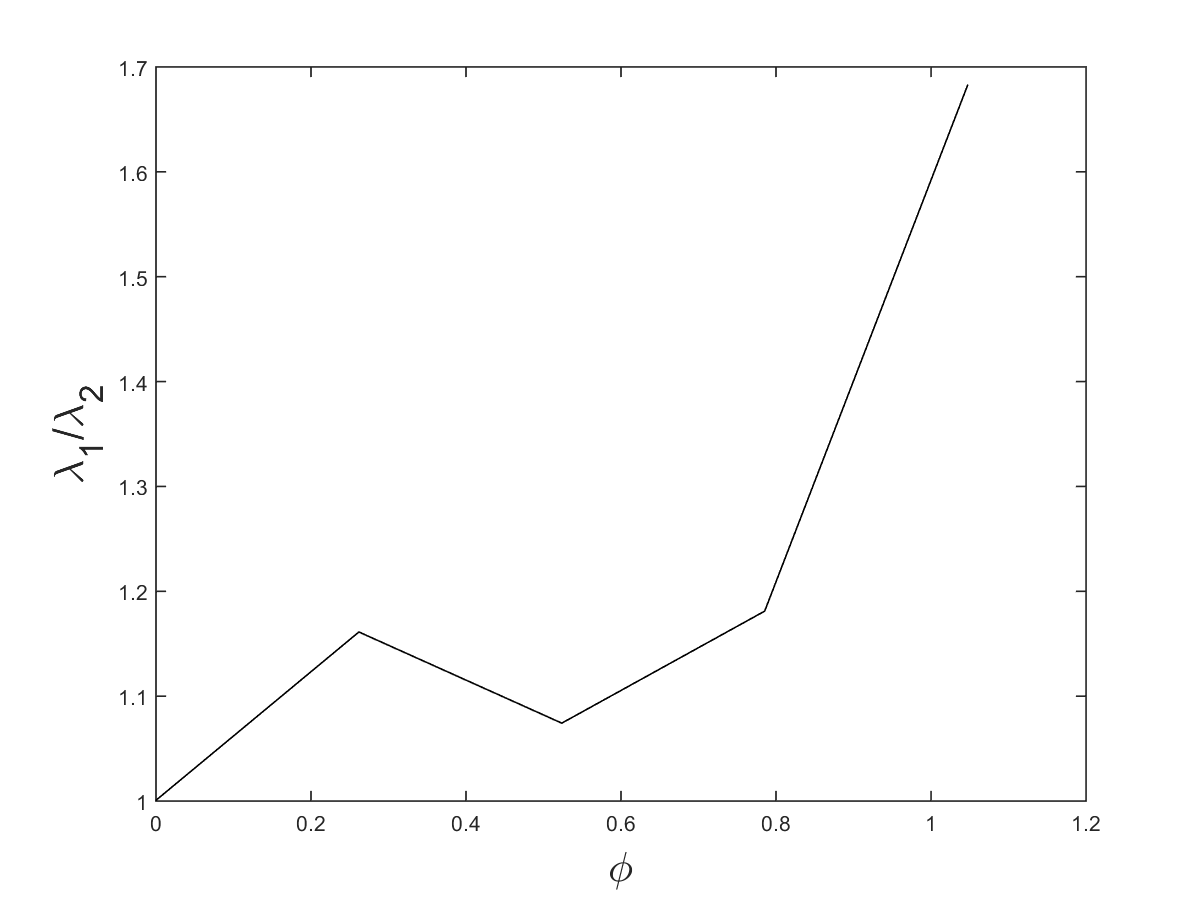}

\end{tabular}
\caption{Changes in microscopic parameters and the reaction of the effective parameters in \eqref{eq:FormLimit}.}
\label{tab:Sensitivity}
\end{table}

\section{Concluding remarks}
\label{sec:Discussion}

We introduced a homogenization scheme relating critical microscopic and macroscopic
quantities in electropermeabilization. The sensitivity analysis of the effective
parameters showed this dependence and opens the door to solve the inverse problem to
monitor those critical microscopic quantities in practice.

While setup optimization for electropermeabilization has been studied using computer simulations, for instance,  in
\cite{MikSnoZupKosCer10,SugYosMorWatHas01,GolRub12,MikSemMekMir00,MikBerSemCemDem98},
from our approach comes an additional constraint: for mapping of the effective
parameters $A^1$ and $A^0$, two currents have to be applied which are nowhere
parallel. An electrode configuration providing this allows for unique reconstruction~
\cite{KimKwoSeoWoo03}.

\appendix
\section{Convergence for homogenization}
We give here the outline of the method used in \cite{AmaAndBisGia04}. It shows how Lemma \ref{lem:K3} is used to prove Theorem \ref{thm:Convergence} for our application.

\label{app:ConvTheo}
\begin{theorem} For the solution $u_\varepsilon$ in~\eqref{eq:PeriodicSystem} and the homogenized solution $u_0$ in~\eqref{eq:FormLimit}, we have the convergence \[
	u_\varepsilon\to u_0
\]
weakly in $L^2([0,T]\times\Omega)$ and strongly in $L^1_{\mathrm{loc}}([0,T],\Omega)$.
\end{theorem}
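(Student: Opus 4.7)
The plan is to combine the energy bound in \eqref{prop:Energy} with a two-scale convergence argument of the type developed in \cite{AmaAndBisGia04}, and to use Lemma \ref{lem:K3} as the bridge that converts the nonlinear boundary term into the linear one treated there. I would split the proof into three stages: (a) extract a weakly convergent subsequence with identifiable macroscopic and corrector parts, (b) pass to the limit in the weak formulation of \eqref{eq:PeriodicSystem} against oscillating test functions, (c) identify the limit with $u_0$ and deduce convergence of the full sequence.

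For step (a), the energy estimate gives $\{u_\varepsilon\}$ bounded in $L^2(0,T;H^1_0(\Omega))$ and $[u_\varepsilon]$ of order $\sqrt{\varepsilon}$ in $L^2(\Gamma_\varepsilon)$. Standard two-scale compactness then yields, along a subsequence, $u_\varepsilon \rightharpoonup u^*$ weakly in $L^2((0,T)\times\Omega)$ with a two-scale corrector for $\nabla u_\varepsilon$ taking the form $\nabla_x u^* + \nabla_y u_1(x,y,t)$, where $u_1$ is $Y$-periodic. The smallness of the jumps forces $u_1$ to be continuous across the reference membrane $\Gamma \subset Y$ up to a vanishing contribution.

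For step (b), I would insert test functions of the form $\varphi_\varepsilon(x,t)=\varphi(x,t)+\varepsilon\,\psi(x,x/\varepsilon,t)$, with $\psi$ built from linear combinations of the cell correctors $\chi_h^0$ and $\chi_h^1 = T(\sigma(\nabla_y\chi_h^0-\b e_h)\cdot \b n)$. The volume integrals pass to the limit in the standard way, producing the contributions of $\sigma_0$ and $A^0$. The delicate term is the boundary integral
\[
\frac{1}{\varepsilon}\int_0^T\!\!\int_{\Gamma_\varepsilon} \sigma_m([u_\varepsilon]_M,t)\,[u_\varepsilon]_M\,\varphi_\varepsilon\,dS\,dt,
\]
and here Lemma \ref{lem:K3} is the decisive input: it replaces this expression by
\[
\frac{1}{\varepsilon}\int_0^T\!\!\int_{\Gamma_\varepsilon} \sigma_m(0,t)\,[u_\varepsilon]\,\varphi_\varepsilon\,dS\,dt,
\]
up to an error bounded by $C(M)$ times $\varepsilon$ in absolute value (the $1/\varepsilon$ is absorbed by the $C(M)\varepsilon$ estimate in \eqref{eq:AuxEstimateK3}). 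Once the boundary term is thus linearized, the arguments of \cite[Sec.~3]{AmaAndBisGia04} apply verbatim: the memory kernel $A^1$ arises through the time-dependent cell transform $T$, and the forcing $\b F$ arises from $T(S_1)$. This is the main obstacle of the proof, and Lemma \ref{lem:K3} is precisely tailored to overcome it.

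For step (c), collecting all the terms shows that $u^*$ is a weak solution of \eqref{eq:FormLimit}. Since that homogenized equation is a linear parabolic problem with a convolution kernel and fixed boundary data, standard energy arguments give uniqueness, so $u^*=u_0$ and the full sequence converges weakly. To upgrade to strong $L^1_{\mathrm{loc}}$ convergence, I would apply an Aubin--Lions type argument: the uniform $L^2(0,T;H^1_0(\Omega))$ bound on $u_\varepsilon$, together with a uniform bound on $\partial_t u_\varepsilon$ in a suitable negative-order dual (obtained by testing \eqref{eq:PeriodicSystem} against $H^1_0$ functions and using the $O(\varepsilon)$ boundary control once more), gives compactness in $L^2_{\mathrm{loc}}((0,T)\times\Omega)$, whence strong convergence in $L^1_{\mathrm{loc}}$.
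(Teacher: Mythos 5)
Your overall strategy coincides with the paper's: the energy estimate \eqref{prop:Energy}, oscillating test functions built from the cell correctors $\chi_h^0,\chi_h^1$, Lemma \ref{lem:K3} to linearize the membrane nonlinearity, and identification of the limit equation \eqref{eq:FormLimit}, following \cite{AmaAndBisGia04}. However, two steps as you describe them do not go through. First, the $\varepsilon$-bookkeeping in the membrane term is off. In the weak formulation \eqref{eq:WeakFormulation} the membrane integral pairs $\sigma_m([u_\varepsilon]_M,t)[u_\varepsilon]_M$ with the \emph{jump} $[\psi]$ of the test function, not with $\psi$ itself as in your display. If you only invoke \eqref{eq:AuxEstimateK3} to "absorb the $1/\varepsilon$", the discrepancy between the nonlinear and the linearized membrane terms is merely $O(1)$, which is not enough to pass to the limit. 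The vanishing of this error requires the additional factor coming from the smallness of the test-function jump: for $\psi=\varphi\,w_h^\varepsilon$ one has $[\psi]=\varphi\,[w_h^\varepsilon]=O(\varepsilon)$ on $\Gamma_\varepsilon$ (the macroscopic part $x_h$ is continuous and the correctors enter at order $\varepsilon$), so that the term $K_{3\varepsilon}$ in \eqref{eq:DefinitionK} is bounded by $\varepsilon^{-1}\cdot C(M)\varepsilon\cdot O(\varepsilon)\to 0$. Without making this extra power of $\varepsilon$ explicit, the key limit passage is not justified.

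Second, your route to the strong $L^1_{\mathrm{loc}}$ convergence is not available as stated. The solution $u_\varepsilon$ jumps across $\Gamma_\varepsilon$, so it is not bounded in $L^2(0,T;H^1_0(\Omega))$ but only in an $\varepsilon$-dependent broken Sobolev space with the jump control $\int_{\Gamma_\varepsilon}[u_\varepsilon]^2\,dS\le C\varepsilon$; a fixed Gelfand triple for Aubin--Lions is therefore missing. Moreover, the bulk equation is purely elliptic in $u_\varepsilon$ (the time derivative enters only through the membrane transmission condition), so testing \eqref{eq:PeriodicSystem} against $H^1_0(\Omega)$ functions yields no bound on $\partial_t u_\varepsilon$ in a negative-order space; the time regularity is slaved to the membrane dynamics and to normal-derivative traces on $\Gamma_\varepsilon$, which the energy estimate does not control uniformly. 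The paper instead obtains the strong $L^1_{\mathrm{loc}}$ compactness directly from \eqref{prop:Energy} via the compactness results of \cite{AmaAndBisGia04} tailored to periodic interface problems with $O(\sqrt{\varepsilon})$ jumps; you would need to invoke (or reprove) such a result rather than the standard Aubin--Lions lemma. The remaining parts of your outline (extraction of weak limits, identification of $\sigma_0$, $A^0$, $A^1$, $\b F$, and full-sequence convergence by uniqueness of the homogenized problem) are consistent with the paper's argument.
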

\begin{proof}
From the estimate (\ref{prop:Energy}) we get, extracting subsequences if needed
\beal{eq:Limits}
	&u_\varepsilon\to u_0,\qquad\sigma\nabla u_\varepsilon\to\xi\qquad &\text{ weakly in } L^2([0,T]\times\Omega),\\
	&u_\varepsilon\to u_0\qquad &\text{ strongly in } L^1_{\mathrm{loc}}([0,T],\Omega).
\eeal
Next, consider the weak formulation of system \eqref{eq:PeriodicSystem}:
\beql{eq:WeakFormulation}
\begin{split}
	\int_0^T\int_\Omega\sigma\nabla u_\varepsilon\cdot\nabla\psi\; dx\;dt
	+ \frac{1}{\varepsilon}\int_0^T\int_{\Gamma_\varepsilon} \sigma_m([u_\varepsilon]_M)[u_\varepsilon]_M[\psi]\;dS\;dt \\
	-\frac{c_m}{\delta}\int_0^T\int_{\Gamma_\varepsilon} [u_\varepsilon]\frac{\partial}{\partial t}[\psi]\;dS\;dt
	- \frac{c_m}{\delta} \int_{\Gamma\varepsilon} [u_\varepsilon](0)[\psi](0)\;dS = 0.
	\end{split}
\eeq
The general idea is to pass to the limit $\varepsilon\to 0$ in this equation, and therefore to obtain the equation for $u_0$. This is possible for special test functions $\psi$.

 Choose for $\psi$ the functions $\varphi\;w_h^\varepsilon(x,t)$ for $h=1,\ldots, d$, where $\varphi$ is a smooth with compact support on $\Omega$, and $w_h^\varepsilon$ is built by the cell functions $\chi^1$ and $\chi^2$:
\[
	w_h^\varepsilon(x,t):=x_h-\varepsilon\chi_h^0\left(\frac{x}{\varepsilon}-\varepsilon\right)\int_t^T\chi_h^1\left(\frac{x}{\varepsilon},\tau-t\right)\; d\tau.
\]
For this definition, given in \cite[(5.1)]{AmaAndBisGia04} one has the weak formulation in \cite[(5.2)-(5.4)]{AmaAndBisGia04}.

 By subtracting the weak equation \eqref{eq:WeakFormulation} for $\psi=w_h^\varepsilon(x,t)$ and the equations  \cite[(5.2)-(5.4)]{AmaAndBisGia04}, one can isolate the term $\int\int\sigma\nabla u_\varepsilon\nabla\varphi w_h^\varepsilon\;dx\;dt $:
\beql{eq:IsolationK}
	\int_0^T\int_{\Gamma_\varepsilon}\sigma\nabla u_\varepsilon\nabla\varphi w_h^\varepsilon\;dx\;dt = K_{1\varepsilon} + K_{2\varepsilon} + K_{3\varepsilon},
\eeq
with
\beal{eq:DefinitionK}
K_{1\varepsilon} &= \int_0^T\int_{\Gamma_\varepsilon}\sigma\nabla w_h^\varepsilon \nabla\varphi u_\varepsilon\;dx\;dt, \\
K_{2\varepsilon} &= -c_m \varepsilon \int_{\Gamma_\varepsilon} (S_1(x,\frac{x}{\varepsilon}) +R_{\varepsilon})\varphi \int_0^T[\chi_h^2](\frac{x}{\varepsilon},\tau) d\tau dS,\\
K_{3\varepsilon} &=  \frac{1}{\varepsilon}\int_0^T\int_{\Gamma_\varepsilon} \Bigl(\sigma_m(0,t)[u_\varepsilon]-\sigma_m([u_\varepsilon],t)[u_\varepsilon]\Bigr)[w_h^\varepsilon] \varphi\;dS\; 
dt.
\eeal
 The limits of $K_{1\varepsilon}$ and $K_{2\varepsilon}$ are the same as in \cite[p.18]{AmaAndBisGia04}, whereas for the limit $K_{3\varepsilon}$, one can show that $K_{3\varepsilon}\to 0$ by  Lemma \ref{lem:K3}.
 One can take then the limit $\varepsilon\to 0$ in \eqref{eq:IsolationK} in order to obtain information on the specific form of the limit $u_0$ in \eqref{eq:Limits}. We get
\beql{eq:SchwereGeburt}
\begin{split}
	-\int_0^T\int_\Omega\xi\cdot\nabla\varphi x_h\;dx\;dt = \int_0^T\int_\Omega \varphi(x)F_h(x,\tau);dx\;d\tau
	\\ + \int_0^T\int_\Omega u_0(x,t)(\sigma_0 I + A^0)\b e_h + \int_0^t u_0(x,\tau)A^1(t-\tau)\b e_h\;d\tau \cdot\nabla\varphi(x)\;dx\;dt
	\end{split}
\eeq
with $A^0$, $A^1$, $\b F$ defined as in \eqref{eq:DefA_F}.
 Choosing $\psi=\varphi\;x_h$ in \eqref{eq:WeakFormulation}, combining with \eqref{eq:SchwereGeburt}, and differentiating in $T$ gives then expressions which show that $u_0\in L^2([0,T],H^1(\Omega))$ and that actually \eqref{eq:FormLimit} is the correct equation of the limit $u_0$.

\end{proof}

%
\small
\def\cprime{$'$}
  \providecommand{\noopsort}[1]{}\def\ocirc#1{\ifmmode\setbox0=\hbox{$#1$}\dimen0=\ht0
  \advance\dimen0 by1pt\rlap{\hbox to\wd0{\hss\raise\dimen0
  \hbox{\hskip.2em$\scriptscriptstyle\circ$}\hss}}#1\else {\accent"17 #1}\fi}

\end{document}